\documentclass[twoside,leqno,10pt]{amsart}
\usepackage{amsfonts}
\usepackage{amsmath}
\usepackage{amscd}
\usepackage{amssymb}
\usepackage{amsthm}
\usepackage{latexsym}
\usepackage{color}
\setlength{\textwidth}{18.2cm}
\setlength{\oddsidemargin}{-0.7cm}
\setlength{\evensidemargin}{-0.7cm}
\setlength{\topmargin}{-0.7cm}
\setlength{\headheight}{0cm}
\setlength{\headsep}{0.5cm}
\setlength{\topskip}{0cm}
\setlength{\textheight}{23.9cm}
\setlength{\footskip}{.5cm}

\newtheorem{theorem}{Theorem}

\newtheorem{Proposition}{Proposition}

\numberwithin{equation}{section}

\title{equidistribution of solutions of ternary quadratic congruences MODULO PRIME POWERS}
\author{Anup Haldar} 
\address{Anup Haldar,
Ramakrishna Mission Vivekananda Educational and Research Institute, Department of Mathematics, G. T. Road, PO Belur Math, Howrah, West Bengal 711202, India}
\email{anuphaldar1996@gmail.com}
\subjclass[2020]{11L40,11T23,11K36} 
\keywords{quadratic congruences, Poisson summation, evaluation of complete exponential sums, parametrization of points, Diophantine equations}

\begin{document}
\begin{abstract}
Let $p$ be a fixed odd prime and $Q(x,y,z)=ax^2+bxy+cy^2+dxz+eyz+fz^2$ be a fixed quadratic form in $\mathbb{Z}[x,y,z]$ which is non-degenerate in $\mathbb{F}_p[x,y,z]$ and $(a(4ac-b^2),p)=1.$ Let $(x_0,y_0,z_0)$ be a fixed point in $\mathbb{Z}^3$. We study the behavior of solutions $(x,y,z)$ of congruences of the form $Q(x,y,z)\equiv0\bmod{q}$ with $q=p^n,$ where max$\{|x-x_0|,|y-y_0|,|z-z_0|\}\leq N$ and $(z,p)=1.$ In fact, we consider a smooth version of this problem and  establish an asymptotic formula (thus the existence of such solutions) when $n\rightarrow\infty$, under the condition $N\geq q^{\frac{1}{2}+\varepsilon}$.
\end{abstract}
\maketitle
\tableofcontents

\section{Introduction and main results} 
Recently, the author co-published an article \cite{arx} titled "Asymptotic behavior of small solutions of quadratic congruences in three variables modulo prime powers". In this work, we studied small solutions of diagonal quadratic congruences of the form 
\begin{equation}\label{prev}
    ax^2+cy^2+fz^2\equiv0\bmod{q},
\end{equation}
where $a,c,f\in\mathbb{Z}$, $q=p^n$  is a power of a fixed odd prime $p$ and $n\rightarrow\infty.$ More precisely, we investigated the asymptotic behavior of small solutions of the congruence \eqref{prev} with max$\{|x|,|y|,|z|\}\leq N$ and $(xyz,p)=1 .$ We also assumed $(acf,p)=1.$ For convenience we considered a smooth version of this problem (i.e. the solutions are suitably weighted) and studied the problem both for arbitrary and fixed coefficients $a,c,f.$ In the case of fixed coefficients $a,c,f$ and $n\rightarrow\infty$, we obtained an asymptotic formula (\cite[Theorem 1]{arx}) if $N\gg q^{1/2+\varepsilon}$, and in the case of coefficients which are allowed to vary with $n$, we obtained such a formula (\cite[Theorem 2]{arx} ) for $N\gg q^{11/18+\varepsilon}$. \\Let \begin{equation}
    Q(x,y,z)=ax^2+bxy+cy^2+dxz+eyz+fz^2
\end{equation} be a quadratic form in $\mathbb{Z}[x,y,z]$ and $\Delta:=\Delta_Q$ be the determinant of the associated matrix
$$
A_Q=\begin{pmatrix}
a & \frac{b}{2} & \frac{d}{2}\\
\frac{b}{2} & c & \frac{e}{2}\\
\frac{d}{2}&\frac{e}{2}&f
\end{pmatrix}.
$$
In this present paper, we study the asymptotic behavior of the solutions of the congruence 
\begin{equation}\label{eqq}
    Q(x,y,z)\equiv 0\bmod{q},
\end{equation}
with fixed coefficients and $(a(4ac-b^2)\Delta,p)=1$ (Note that $a$,$\frac{4ac-b^2}{4}$ and $\Delta$ are leading principal minors of the associated matrix $A_Q$). Here, for a fixed point $(x_0,y_0,z_0)\in\mathbb{Z}^3$, we consider the set of solutions $(x,y,z)$ with $(z,p)=1,$ and max$\{|x-x_0|,|y-y_0|,|z-z_0|\}\leq N$ obtaining a similar asymptotic formula as in \cite{arx} if $N\gg q^{1/2+\varepsilon}$. The condition $(z,p)=1$ is slightly different than the condition $(xyz,p)=1$ and it also excludes the trivial solution (0,0,0). The author is in a search of a suitable way to generalize \cite[Theorem 2]{arx} to general quadratic forms. Readers are requested to see \cite{arx} for a more detailed introduction.
To state our result, we define the following quantity
$$
C_p(Q):=\frac{(p-s_p(Q)))(p-1)}{p^2},
$$
where
\begin{equation} \label{sdef}
S_p(Q):=\left(\frac{b^2-4ac}{p}\right)
.\end{equation}
We note that if $q=p$ is an odd  prime, then the total number  of solutions $(x,y,z)$ to the congruence \eqref{eqq} satisfying $(z,p)=1$ turns out to be $(p-1)(p-S_p(Q))$.  So a solution exists as $p-s_p(Q)\geq 2$.  Our main result is as follows. 
 
\begin{theorem} \label{mainresult}
Let $\varepsilon>0$ be fixed, $p$ be a fixed odd prime, and $Q(x,y,z)=ax^2+bxy+cy^2+dxz+eyz+fz^2$ be a fixed  quadratic form in $\mathbb{Z}[x,y,z]$  with $(a(4ac-b^2)\Delta,p)=1.$ Let $\Phi:\mathbb{R}\rightarrow \mathbb{R}_{\ge 0}$ be a Schwartz class function. Set $q:=p^n$. Then as $n\rightarrow \infty$, we have the asymptotic formula
\begin{equation} \label{main}
\sum\limits_{\substack{(x,y,z)\in \mathbb{Z}^3\\ (z,p)=1\\ Q(x,y,z) \equiv 0 \bmod{q}}} \Phi\left(\frac{x-x_0}{N}\right)
\Phi\left(\frac{y-y_0}{N}\right)\Phi\left(\frac{z-z_0}{N}\right)\sim
\hat{\Phi}(0)^3\cdot C_p(Q)\cdot \frac{N^3}{q},
\end{equation}
provided that $N\ge q^{1/2+\varepsilon}$.
\end{theorem}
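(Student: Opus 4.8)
My plan follows the Poisson-summation strategy of \cite{arx}. First I would detect the congruence by orthogonality, writing $\mathbf 1_{q\mid Q}=q^{-1}\sum_{m\bmod q}e(mQ/q)$, and retain the condition $(z,p)=1$ by restricting the residue class of $z$. After breaking each of $x,y,z$ into a residue mod $q$ plus a multiple of $q$ — using that $Q(x,y,z)\equiv Q(\alpha,\beta,\gamma)\bmod q$ whenever $x\equiv\alpha$, $y\equiv\beta$, $z\equiv\gamma$ — Poisson summation applied to each smooth sum replaces $\sum_{u}\Phi((\alpha+qu-x_0)/N)$ by $\tfrac Nq\sum_{k}\hat\Phi(kN/q)\,e(k(\alpha-x_0)/q)$. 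Writing $S$ for the left-hand side of \eqref{main}, this becomes
\[ S=\frac{N^3}{q^4}\sum_{m\bmod q}\ \sum_{\mathbf k\in\mathbb Z^3}\hat\Phi\!\Big(\tfrac{k_1N}{q}\Big)\hat\Phi\!\Big(\tfrac{k_2N}{q}\Big)\hat\Phi\!\Big(\tfrac{k_3N}{q}\Big)\, e\!\Big(\tfrac{-k_1x_0-k_2y_0-k_3z_0}{q}\Big)\,T(m,\mathbf k), \]
where $T(m,\mathbf k)=\sum_{\alpha,\beta\bmod q}\sum_{\gamma\bmod q,\,(\gamma,p)=1}e\big((mQ(\alpha,\beta,\gamma)+k_1\alpha+k_2\beta+k_3\gamma)/q\big)$ is a complete exponential sum to be evaluated.

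The main term is the frequency $\mathbf k=\mathbf 0$. Carrying out the $m$-sum there by orthogonality gives $\sum_{m\bmod q}T(m,\mathbf 0)=q\,M(q)$ with $M(q):=\#\{(\alpha,\beta,\gamma)\bmod q:(\gamma,p)=1,\ Q\equiv 0\bmod q\}$, so this term contributes $\hat\Phi(0)^3M(q)N^3/q^3$. To match \eqref{main} I must show $M(q)=C_p(Q)q^2$. Since $p$ is odd and $(\Delta,p)=1$, the matrix $2A_Q$ is invertible modulo $p$; hence at any zero of $Q$ with $(\gamma,p)=1$ the gradient $\nabla Q$ cannot vanish mod $p$ (otherwise $(\alpha,\beta,\gamma)\in\ker(2A_Q)=\{0\}$, contradicting $(\gamma,p)=1$). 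Every such residue is therefore a smooth point, and Hensel's lemma lifts it to exactly $p^{2(n-1)}$ solutions mod $q$. Combined with the mod-$p$ count $(p-1)(p-S_p(Q))$ recorded in the introduction, this yields $M(q)=p^{2(n-1)}(p-1)(p-S_p(Q))=C_p(Q)q^2$, and the desired main term $\hat\Phi(0)^3C_p(Q)N^3/q$.

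It then remains to prove that the frequencies $\mathbf k\neq\mathbf 0$ contribute $o(N^3/q)$. The Schwartz decay of $\hat\Phi$ confines the effective range to $|k_i|\ll q^{1+\delta}/N\le q^{1/2-\varepsilon+\delta}$ for any $\delta>0$, i.e. below $q^{1/2}$, with negligible tails. For these frequencies I would evaluate $T(m,\mathbf k)$ by the standard prime-power machinery: write $p^{j}\|m$ so that the quadratic phase collapses to modulus $p^{n-j}$, complete the square using the mod-$p$ diagonalizability of $Q$ guaranteed by $(a(4ac-b^2)\Delta,p)=1$, and record the resulting one-dimensional quadratic Gauss sums together with the Ramanujan-type sum produced by the restriction $(\gamma,p)=1$. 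This gives a factor of size $\asymp (p^{n-j})^{3/2}$ and a phase $e(-\overline{4m}\,Q^\ast(\mathbf k)/p^{n-j})$, where $Q^\ast$ is the dual form attached to $A_Q^{-1}$; summing the resulting twisted sums over $m$ forces $Q^\ast(\mathbf k)$ to be divisible by a high power of $p$, so that only a sparse subset of the truncated $\mathbf k$ survives with nonzero contribution.

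The main obstacle is precisely this evaluation-and-summation step. One must bound $\frac{N^3}{q^4}\sum_{m}\sum_{\mathbf k\neq\mathbf 0}|\hat\Phi(\cdots)|\,|T(m,\mathbf k)|$ uniformly over the $p$-valuation $j$ of $m$ and over the arithmetic of $Q^\ast(\mathbf k)$, and the estimate closes only because the square-root saving in the Gauss sums, the cut-off $|k_i|<q^{1/2}$ forced by $\hat\Phi$, and the cancellation in the $m$-sum together beat the prefactor $N^3/q^4$ exactly at the threshold $N=q^{1/2+\varepsilon}$ — dropping the $\varepsilon$ would make the error of the same order as the main term. A secondary but unavoidable complication is the coprimality constraint: writing $\mathbf 1_{(\gamma,p)=1}=1-\mathbf 1_{p\mid\gamma}$ splits $T(m,\mathbf k)$ into a full sum and a $p\mid\gamma$ sub-sum (the latter a quadratic sum in the two free variables $\alpha,\beta$), so the whole Gauss-sum analysis must be run twice and the two pieces reconciled.
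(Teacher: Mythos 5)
Your setup and main-term computation are correct, and your route is genuinely different from the paper's: you detect the congruence by an additive-character average over $m$ and apply Poisson in all three variables, whereas the paper never introduces an $m$-sum at all. Instead it uses $(z,p)=1$ to reduce to the dehomogenized conic $q(\tilde x,\tilde y)\equiv 0\bmod{p^n}$, parametrizes its solutions by lines through a fixed point (Propositions~\ref{para} and~\ref{Par}), applies Poisson only in $x,y$, evaluates the resulting \emph{one-variable} complete sums with rational-function phases by Cochrane's theorem (Proposition~\ref{Expsums}), and only then performs a single Poisson summation in $z$. Your Hensel argument for $M(q)=p^{2(n-1)}(p-1)\left(p-S_p(Q)\right)=C_p(Q)q^2$ is sound (nonvanishing of $\nabla Q$ at points with $(\gamma,p)=1$ follows from invertibility of $2A_Q$ modulo $p$), so the identification of the main term $\hat\Phi(0)^3C_p(Q)N^3/q$ is complete. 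Notably, both routes funnel into the same dual object: a quadratic form in the frequency variables whose zeros must be counted.

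The genuine gap is in the error term, at precisely the step you defer. After the Gauss-sum evaluation and the Ramanujan-type cancellation in the $m$-sum, the surviving frequencies (with $p^j\| m$, $\mathbf k=p^j\mathbf k'$) satisfy $Q^{\ast}(\mathbf k')\equiv 0\bmod{p^{n-j-1}}$ with $|k_i'|\le L_j\approx q^{1+\delta}/(Np^j)$, and since $L_j^2$ is smaller than the modulus once $N\ge q^{1/2+\varepsilon}$, this congruence is equivalent to the exact equation $Q^{\ast}(\mathbf k')=0$. Everything now hinges on \emph{how} one counts integer points on this nondegenerate quadric cone in a box of side $L_j$, and ``sparse'' is not an argument. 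The trivial bound (fix $k_1',k_2'$; at most two $k_3'$) gives $O(L_j^2)$, and inserting $O(L_j^2)$ together with the Gauss-sum size $p^{3j}(p^{n-j})^{3/2}$ and the $m$-average factor $\le p^{n-j}$ into your prefactor $N^3/q^4$ produces an error of order $Nq^{1/2+\delta}$, which is $o(N^3/q)$ only when $N\gg q^{3/4}$: as written, your argument proves the theorem only for $N\ge q^{3/4+\varepsilon}$, not $q^{1/2+\varepsilon}$. To reach the stated threshold one needs the bound $O_{\varepsilon}(L_j^{1+\varepsilon})$ for points on a fixed nonsingular ternary quadric in a box, which is a real theorem --- the paper invokes the Heath-Brown--Browning count (Proposition~\ref{Hbb}), applied after summing over the gcd of the frequency vector --- and is not a consequence of the square-root saving, the cutoff $|k_i|<q^{1/2}$, or the $m$-cancellation you list. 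Your closing sentence, that the estimate ``closes exactly at the threshold $N=q^{1/2+\varepsilon}$,'' asserts the needed conclusion rather than proving it; the missing lattice-point input is exactly what converts the exponent $3/4$ into $1/2$.
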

One can see that the right-hand side of \eqref{main} does not depend on the choice of the fixed point $(x_0,y_0,z_0)$, so the solutions $(x,y,z)$  of $Q(x,y,z) \equiv 0 \bmod{q}$, with $(z,p)=1$ are equally distributed throughout $\mathbb{Z}^3$. Our method is a generalization of the method that is used in the proof of \cite[theorem 1]{arx}. Key ingredients in our method are a parametrization of $\mathbb{Q}_p$- rational points $(x,y)$ on the conic  $$ax^2+bxy+cy^2+dx+ey+f=0,$$
repeated use of Poisson summation and an explicit evaluation of complete exponential sums with rational functions to prime power moduli due to Cochrane \cite{CoZ}. This transforms the problem into a dual problem which amounts to counting solutions of quadratic Diophantine {\it equations} (rather than {\it congruences}).
Then we use a result due to Heath-Brown\cite[corollary 2]{Hb} for counting the number of the solutions of the obtained dual quadratic equation.

{\bf Acknowledgements.} I would like to express my deep gratitude to Professor Stephan Baier, my research supervisor, for his patient guidance, enthusiastic encouragement, and useful critiques of this research work. Secondly, I would like to thank CSIR, Govt. of India for financial support in the form of a Senior Research Fellowship under file number 09/934(0016)/2019-EMR-I. \\ \\
{\bf Data availability statement:} This manuscript has no associated data.

\section{Preliminaries}
We will use the notation 
$$
e_q(z):=e\left(\frac{z}{q}\right)=e^{2\pi i z/q}
$$
for $q\in \mathbb{N}$ and denote by $G_q$ the quadratic Gauss sum
$$
G_q=\sum\limits_{x=1}^q e_q(x^2).
$$
We recall that if $q=p^k$ for some odd prime $p$ and $k\in\mathbb{N}$, then 
\begin{equation}\label{Fd}
    G_q=\begin{cases}
   p^{\frac{k}{2}}~~~&\text{if}~k~~\text{even,}\\
   p^{\frac{k-1}{2}}G_p~~&\text{if}~k~~\text{odd,}
\end{cases}
\end{equation}
where $\left(\frac{y}{q}\right)$ is the Jacobi symbol. We also denote by $q(x,y)$ the de-homogenized form of the quadratic form $Q(x,y,z),$ 
\begin{equation}\label{qxy}
    q(x,y)=ax^2+bxy+cy^2+dx+ey+f.
\end{equation}
Throughout the sequel, we will write $\overline{\alpha}$ for a multiplicative inverse of $\alpha$ to the relevant modulus, which will always be apparent from the context. 

The following preliminaries will be needed in the course of this paper.
\begin{Proposition}[Parametrization of points on a conic] \label{para}
Let $K$ be a field, $(\alpha,\beta)\in K^2$ and $ q(x,y)\in K[x,y]$ be as in \eqref{qxy}  with $a\Delta\neq 0$ such that 
$$
q(\alpha,\beta)=0,
$$
and $q_x(x,y),q_y(x,y)$ be the partial derivatives of $q(x,y)$ with respect to $x$ and $y$ respectively.
Then, the map 
\begin{equation} \label{bijection}
s : \left\{t\in K : at^2+bt+c\not=0\right\}\cup\{\infty\} \longrightarrow \left\{(x,y)\in K^2 :q(x,y)=0\right\}
\end{equation}
defined by 
\begin{equation} \label{mt}
s(t):=\begin{cases}
\left(\alpha-tM(t),\beta-M(t)\right)~~\text{if}~~t\in K,\\
\left(\alpha-\frac{A}{a},\beta\right)~~\text{if}~~t=\infty,
\end{cases}
\end{equation}
is bijective, where
$$M(t)=\frac{At+B}{at^2+bt+c},$$
with $A=q_x(\alpha,\beta)$ and $B=q_y(\alpha,\beta).$ 
\end{Proposition}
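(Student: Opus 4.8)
The plan is to interpret $s$ geometrically as projection of the conic from the base point $P_0=(\alpha,\beta)$: lines through $P_0$ are parametrized by a slope $t\in K\cup\{\infty\}$ via $x-\alpha=t(y-\beta)$ (with $t=\infty$ the horizontal line $y=\beta$), and the claim is that each \emph{admissible} line meets the conic $q=0$ in exactly one point besides $P_0$, namely $s(t)$. The computational engine is a single identity. Writing a point of the slope-$t$ line as $(x,y)=(\alpha+tu,\beta+u)$ and expanding (the Taylor expansion terminates, $q$ being quadratic), and using $q(\alpha,\beta)=0$, $q_x(\alpha,\beta)=A$, $q_y(\alpha,\beta)=B$ together with $q_{xx}=2a,\ q_{xy}=b,\ q_{yy}=2c$, I obtain
\begin{equation*}
q(\alpha+tu,\,\beta+u)=u\bigl[(At+B)+u\,(at^2+bt+c)\bigr].
\end{equation*}
First I would record this identity, from which every assertion can be read off.

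Second, I would settle well-definedness. For $t$ in the stated domain one has $at^2+bt+c\ne0$, so the bracket above vanishes precisely at $u=-M(t)$; since $s(t)=(\alpha-tM(t),\beta-M(t))=(\alpha+tu,\beta+u)$ with $u=-M(t)$, this gives $q(s(t))=0$. For $t=\infty$ I would instead observe that $q(x,\beta)=ax^2+(b\beta+d)x+(c\beta^2+e\beta+f)$ is a genuine quadratic in $x$ (here $a\ne0$ is used) one of whose roots is $\alpha$, so by the sum-of-roots relation the other root is exactly $\alpha-A/a$; hence $s(\infty)$ lies on the conic as well. The key structural input --- and what I expect to be the main obstacle --- is that $at^2+bt+c$ and $At+B$ have \emph{no common zero} in $K$: if some $t_0$ killed both, the displayed identity would give $q(\alpha+t_0u,\beta+u)\equiv0$ for all $u$, i.e. a whole line would lie on the conic, forcing $q$ to factor into linear polynomials and hence $\Delta=0$, contrary to hypothesis. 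The same hypothesis $\Delta\ne0$ also forces $(A,B)\ne(0,0)$, since otherwise $P_0$ would be a singular point of the conic.

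Finally I would prove bijectivity. For surjectivity, take any $(x,y)$ on the conic. If $y\ne\beta$, set $t=(x-\alpha)/(y-\beta)$; then $u=y-\beta\ne0$ solves $(At+B)+u(at^2+bt+c)=0$, and the no-common-zero fact forces $at^2+bt+c\ne0$ (otherwise $At+B=0$ too), so $t$ is admissible with $u=-M(t)$, giving $s(t)=(x,y)$. If $y=\beta$ with $x\ne\alpha$, then $(x,y)$ must be the second root $\alpha-A/a$ of $q(\cdot,\beta)$, so $s(\infty)=(x,y)$; and the base point $P_0$ itself is attained at the tangent slope $t=-B/A$ when $A\ne0$ (which is admissible, since a tangent direction cannot be asymptotic by the no-common-zero fact) or at $t=\infty$ when $A=0$. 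For injectivity, suppose $s(t_1)=s(t_2)=P$. If $P\ne P_0$, both slope-lines pass through the two distinct points $P_0$ and $P$, hence coincide, so $t_1=t_2$ since slope determines the line; if $P=P_0$, then $At_i+B=0$ (or $t_i=\infty$ with $A=0$), which, as $(A,B)\ne(0,0)$, pins down a single value. This establishes that $s$ is a bijection.
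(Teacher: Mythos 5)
Your proof is correct, and its computational core is exactly the paper's: parametrize lines through $(\alpha,\beta)$ by slope $t$, substitute $x-\alpha=t(y-\beta)$ into the terminating Taylor expansion of $q$, and read off the second intersection point from the identity $q(\alpha+tu,\beta+u)=u\bigl[(At+B)+u(at^2+bt+c)\bigr]$; your formulas for $s(t)$ and $s(\infty)$ (via the sum-of-roots relation, using $a\neq 0$) coincide with the paper's. Where you genuinely diverge is in how bijectivity is certified. The paper appeals to B\'ezout's theorem (``the line meets the conic in at most one further point'') together with uniqueness of the tangent line, leaving implicit why no line can lie entirely inside the conic and why every rational point is hit by an admissible slope. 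You instead isolate and prove the two algebraic facts that make this work over an arbitrary field: (i) $At+B$ and $at^2+bt+c$ have no common zero, since a common zero $t_0$ would make $q$ vanish identically on a line, forcing $q$ to split into linear factors and hence $\Delta=0$; and (ii) $(A,B)\neq(0,0)$, since otherwise $(\alpha,\beta)$ would be a singular point, again forcing $\Delta=0$. With these in hand your surjectivity and injectivity checks (including the slightly delicate bookkeeping of which slope hits the base point $P_0$ itself: $t=-B/A$ when $A\neq 0$, $t=\infty$ when $A=0$) are complete and purely algebraic. So your version buys self-containedness and rigor over a general field $K$ (with the characteristic-$\neq 2$ convention implicit in the definition of $\Delta$), at the cost of length, while the paper's version is shorter but leans on a geometric theorem whose applicability over non-closed fields is left to the reader.
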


\begin{proof}
We use a standard method of parametrization. Given a $K$-rational point $P=(\alpha,\beta)$ on the conic and $t\in K\cup\{\infty\}$, by B\'ezout's theorem, the line $\mathcal{L}(t)$ through $P$ given by the equation $x-\alpha=t(y-\beta)$ intersects the conic in $P$ and at most one more point $s(t)$(where by $\mathcal{L}(\infty)$ we mean the line $y=\beta$). This point may be $P$ itself, in which case $\mathcal{L}(t)$ is the tangent to the conic at $P$. Conversely, if $Q$ is a $K$-rational point on the conic, then there exists precisely one line through $P$ and $Q$ (which is the tangent in the case $P=Q$, and the tangent is unique because $\Delta\neq 0$) with rational slope $t$ or $\infty$. Hence, we have a bijection between the $K$-rational points on the conic and the set of $t\in K\cup\{\infty\}$ for which $s(t)$ exists. To find $s(t)$, we write the Taylor series expansion around $(\alpha,\beta)$ in the form
\begin{align*}
q(x,y)=&q(\alpha,\beta)+q_x(\alpha,\beta)(x-\alpha)+q_y(\alpha,\beta)(y-\beta)\\
&\ \ \ \ +\frac{1}{2}q_{xx}(\alpha,\beta)(x-\alpha)^2+q_{xy}(\alpha,\beta)(x-\alpha)(y-\beta)+\frac{1}{2}q_{yy}(\alpha,\beta)(y-\beta)^2.
\end{align*}
Now we plug in $ x-\alpha=t(y-\beta)$ and get
$$(At+B)(y-\beta)+(at^2+bt+c)(y-\beta)^2=0.$$
Therefore, if $y\neq\beta$ (i.e when $t\in K$)
\begin{equation*}
\begin{split}
y=&\beta-\frac{At+B}{at^2+bt+c}
\end{split}
\end{equation*}
and 
\begin{equation*}
    \begin{split}
        x&=\alpha+(y-\beta)t\\
        &=\alpha-t\frac{At+B}{at^2+bt+c},
    \end{split}
\end{equation*}
which exist if $at^2+bt+c\neq 0.$\\
If $y=\beta$ (i.e when $t=\infty$), we have
$$ax^2+b\beta x+c\beta^2+dx+e\beta+f=0,$$
which is a quadratic polynomial in $x$ (as $a\not=0$) having two roots $x=\alpha $ and $\alpha-\frac{A}{a}.$
This gives the desired parametrization in \eqref{mt}, and the map in \eqref{bijection} is bijective.
\end{proof}

\begin{Proposition} \label{nofroot}
Let $p>2$ be a prime, $n\in \mathbb{N}$ and $ q(x,y)\in\mathbb{Z}[x,y]$ be as in \eqref{qxy}  with $(a\Delta,p)=1$. Then the number of solutions $(x,y) \bmod p^n$ of the congruence 
$$
q(x,y)\equiv 0\bmod{p^n}
$$ 
equals
  \[N_{p^n}(q) =
    p^{n-1}\left(p-\left(\frac{b^2-4ac}{p}\right)\right).
  \]
\end{Proposition}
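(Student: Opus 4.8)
The plan is to complete the square in $x$ so as to replace the conic by a standard ``$u^2 = g(y)$'' shape, and then to count points on the resulting plane curve modulo $p^n$ by Hensel lifting, the crucial input being that the discriminant of $g$ equals $-64a\Delta$ and is therefore a unit modulo $p$.

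First I would use $(2a,p)=1$ to write $4a\,q(x,y) = (2ax+by+d)^2 - g(y)$, where $g(y) = (b^2-4ac)y^2 + (2bd-4ae)y + (d^2-4af)$. For each fixed $y$ the substitution $u = 2ax+by+d$ is a bijection of the residues $x \bmod p^n$, and multiplication by $4a$ is invertible, so $q(x,y)\equiv 0 \bmod{p^n}$ is equivalent to $u^2 \equiv g(y) \bmod{p^n}$. Hence $N_{p^n}(q)$ equals the number of pairs $(u,y) \bmod p^n$ with $u^2 \equiv g(y) \bmod{p^n}$, i.e. the number of points modulo $p^n$ on the curve $F(u,y) := u^2 - g(y) = 0$.

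The key computation is that the discriminant of $g$ in the variable $y$ is $(2bd-4ae)^2 - 4(b^2-4ac)(d^2-4af) = -64a\Delta$, which is a unit modulo $p$ by the hypothesis $(a\Delta,p)=1$. I would use this to show that $F=0$ is nonsingular modulo $p$: at a solution one has $\nabla F = (2u,\, -g'(y))$, and if this vanished modulo $p$ then $u\equiv 0$ would force $g(y)\equiv u^2 \equiv 0$ while simultaneously $g'(y)\equiv 0$, so $y$ would be a repeated root of $g$ modulo $p$, contradicting that $\mathrm{disc}_y(g)$ is a unit. Note that this argument is insensitive to whether or not $p\mid b^2-4ac$, which is exactly why completing the square only in $x$ — rather than diagonalizing the binary form fully — is the correct move: the leading coefficient $b^2-4ac$ of $g$ may vanish modulo $p$, but its discriminant never does.

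With nonsingularity in hand, the standard lifting step applies: the gradient at a solution modulo $p^k$ depends only on its reduction modulo $p$ and is thus a nonzero vector over $\mathbb{F}_p$, so the solutions modulo $p^{k+1}$ lying above it are governed by a single nontrivial linear congruence in two unknowns over $\mathbb{F}_p$, which has exactly $p$ solutions. Iterating yields $N_{p^n}(q) = p^{n-1}N_p(q)$, reducing everything to the count modulo $p$. Finally I would evaluate $N_p(q) = \sum_{y \bmod p}\bigl(1 + \left(\frac{g(y)}{p}\right)\bigr) = p + \sum_{y \bmod p}\left(\frac{g(y)}{p}\right)$, using the convention $\left(\frac{0}{p}\right)=0$; the classical value of a quadratic character sum of a quadratic polynomial with nonzero discriminant gives $\sum_{y \bmod p}\left(\frac{g(y)}{p}\right) = -\left(\frac{b^2-4ac}{p}\right)$, a formula which also covers the degenerate case $p\mid b^2-4ac$ (there $g$ is linear modulo $p$ since $2bd-4ae$ is then a unit, and the sum vanishes, matching $\left(\frac{0}{p}\right)=0$). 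Combining gives $N_p(q) = p - \left(\frac{b^2-4ac}{p}\right)$ and hence $N_{p^n}(q) = p^{n-1}\bigl(p - \left(\frac{b^2-4ac}{p}\right)\bigr)$. I expect the main obstacle to be the uniform nonsingularity check, that is, correctly isolating the discriminant identity $\mathrm{disc}_y(g) = -64a\Delta$ and observing that the hypothesis $(a\Delta,p)=1$ — rather than the stronger condition $(a(4ac-b^2)\Delta,p)=1$ of the theorem — is precisely what makes the curve smooth across both cases $p\nmid b^2-4ac$ and $p\mid b^2-4ac$.
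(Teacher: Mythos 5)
Your proposal is correct, and the details check out: the identity $4a\,q(x,y)=(2ax+by+d)^2-g(y)$, the discriminant computation $\mathrm{disc}_y(g)=-64a\Delta$, the nonsingularity of $u^2=g(y)$ modulo $p$ (uniform in both cases via $\mathrm{disc}_y(g)=g'(y)^2-4(b^2-4ac)g(y)$), the lifting factor of exactly $p$ per step, and the character-sum evaluation including the degenerate case $p\mid b^2-4ac$ all hold. However, your route to the count modulo $p$ differs from the paper's. The paper keeps the conic $q(x,y)=0$ as is: it counts solutions modulo $p$ by invoking its Proposition \ref{para} (the line parametrization through a base point) over $K=\mathbb{F}_p$, so that solutions biject with $\left\{t\in\mathbb{F}_p: at^2+bt+c\neq 0\right\}\cup\{\infty\}$, giving $p-\left(\frac{b^2-4ac}{p}\right)$ at once; it then runs the same Hensel-type lifting you use, with nonsingularity deduced directly from $(\Delta,p)=1$ (a vanishing gradient at a zero of $q$ would make the projective conic singular). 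The trade-off: the paper's argument is shorter because it reuses the parametrization machinery that the rest of the paper needs anyway, but applying Proposition \ref{para} over $\mathbb{F}_p$ presupposes a base point $(\alpha,\beta)$ on the conic, an existence fact the paper does not separately justify at that stage (indeed its Proposition \ref{Par} later cites this very count to get such a point). Your completion-of-the-square argument is self-contained — no base point is needed, existence of solutions falls out of the count — and it makes the role of the hypothesis $(a\Delta,p)=1$ completely explicit through the identity $\mathrm{disc}_y(g)=-64a\Delta$, at the modest cost of the discriminant computation and the classical quadratic character sum, which the paper's parametrization lets it avoid.
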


\begin{proof}
Since $a\not\equiv0\bmod{p}$, the number of solutions of the congruence $at^2+bt+c\equiv0\bmod{p}$ is $1+\left(\frac{b^2-4ac}{p}\right)$.
So using Proposition \ref{para} with $K=\mathbb{F}_p$, we get that the number of solutions of the congruence 
           $$
           q(x,y)\equiv0\bmod{p}
           $$
           is $N_p(q)=p-\left(\frac{b^2-4ac}{p}\right)$. Now, we use a Hensel-type argument as follows. Let $(x,y)$ be a solution of the congruence 
           \begin{equation}
               q(x,y)\equiv0\bmod{p^k}.
           \end{equation}
           We want to lift it to a solution $(\tilde{x},\tilde{y})$ of the congruence 
           \begin{equation}
                q(x,y)\equiv0\bmod{p^{k+1}}.
           \end{equation}
           So we write $\tilde{x}=x+k_1p^k,\tilde{y}=y+k_2p^k$ and use the Taylor series expansion around $(x,y)$ to get
           $$
           \frac{q(x,y)}{p^k}+q_x(x,y)k_1+q_y(x,y)k_2\equiv0\bmod{p}.
           $$
           Now  $(\Delta,p)=1$ implies $(q_x(x,y),q_y(x,y))\not\equiv(0,0)\bmod{p}$. So the congruence in $k_1,k_2$ has exactly $p$ solutions.
           Thus, 
               $$
               N_{p^{k+1}}(q)=pN_{p^{k}}(q).
               $$
               Hence, using mathematical induction the Proposition is proved.
\end{proof}
\begin{Proposition}\label{Par}
Let $p>2$ be a prime, $n\in \mathbb{N}$ and $ q(x,y)\in \mathbb{Z}[x,y]$ be as in \eqref{qxy} with $(a\Delta,p)=1$. Assume that $q(\alpha,\beta)\equiv0\bmod{p^n}.$ Then the solutions $(x,y) \bmod p^n$ of the congruence 
$$
q(x,y)\equiv 0\bmod{p^n}
$$ 
are parametrized as 
$$
M:=\bigcup\limits_{s=0}^n M_s,
$$
where
$$
M_s:=\left\{\left(\tilde{x}\left(\frac{t}{p^s}\right),\tilde{y}\left(\frac{t}{p^s}\right)\right):t=1,..,p^{n-s},at^2+btp^s+cp^{2s}\not\equiv0\bmod{p}\right\} \mbox{ for } s=0,1,2...,n
$$
and $\tilde{x}(t),\tilde{y}(t)$ are defined as below,
\begin{equation}\label{parr}
\begin{split}
\tilde{x}(t)&:=\alpha-t\frac{At+B}{at^2+bt+c},\\
\tilde{y}(t)&:=\beta-\frac{At+B}{at^2+bt+c},
\end{split}
\end{equation}
with $A=q_x(\alpha,\beta)$ and $B=q_y(\alpha,\beta).$
\end{Proposition}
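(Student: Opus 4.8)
The plan is to prove that the explicit assignment $(s,t)\mapsto \left(\tilde x(t/p^s),\tilde y(t/p^s)\right)$ is a \emph{bijection} from the index set describing $M$ onto the full solution set of $q(x,y)\equiv 0\bmod{p^n}$, whose cardinality $N_{p^n}(q)=p^{n-1}\!\left(p-S_p(Q)\right)$ is already known from Proposition \ref{nofroot}. I would organize this into three parts: well-definedness together with the fact that each image is a solution; a count showing the index set has exactly $N_{p^n}(q)$ elements; and injectivity, from which surjectivity (hence the claimed equality $M=\{(x,y):q(x,y)\equiv0\}$) follows automatically by comparing cardinalities.

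First I would check Step 1. For an admissible pair $(s,t)$ the condition $at^2+btp^s+cp^{2s}\not\equiv0\bmod p$ together with $(a,p)=1$ makes the denominator a unit mod $p^n$, so $\lambda:=(At+Bp^s)(at^2+btp^s+cp^{2s})^{-1}$ is well defined and the point equals $(\alpha-t\lambda,\ \beta-p^s\lambda)$. Substituting $x-\alpha=-t\lambda$ and $y-\beta=-p^s\lambda$ into the \emph{exact} degree-two Taylor identity
\[
q(x,y)=q(\alpha,\beta)+A(x-\alpha)+B(y-\beta)+a(x-\alpha)^2+b(x-\alpha)(y-\beta)+c(y-\beta)^2,
\]
the first- and second-order terms collapse to $-(At+Bp^s)\lambda+(at^2+btp^s+cp^{2s})\lambda^2$, which vanishes by the very definition of $\lambda$; hence $q\equiv q(\alpha,\beta)\equiv0\bmod{p^n}$. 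For Step 2 the counting is elementary: the $s=0$ family excludes the $1+S_p(Q)$ residues of $at^2+bt+c\equiv0\bmod p$ and contributes $p^{n-1}(p-1-S_p(Q))$ parameters; each $1\le s\le n-1$ contributes $p^{\,n-s-1}(p-1)$ (the condition reducing to $(t,p)=1$); and $s=n$ contributes one. These sum to $p^{n-1}(p-S_p(Q))=N_{p^n}(q)$.

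The heart is Step 3, injectivity. From the explicit form one has the exact relation $p^s(\tilde x-\alpha)=t(\tilde y-\beta)$, so the $p$-adic \emph{valuation gap} $g:=v_p(\tilde y-\beta)-v_p(\tilde x-\alpha)$ equals $-v_p(t)\le 0$ for the $s=0$ family, equals $s$ for $1\le s\le n-1$, and is maximal for $s=n$; this cleanly separates the families, so I only need injectivity within a fixed $s$. There a collision forces equal $\lambda$-values at two slopes, and a first-order expansion shows the obstruction is controlled by the derivative $\lambda'$ in the slope variable. The decisive input is the conic identity
\[
aB^2-bAB+cA^2=(4ac-b^2)\,q(\alpha,\beta)-4\Delta\equiv-4\Delta\not\equiv0\bmod p,
\]
which holds because $q(\alpha,\beta)\equiv0\bmod{p^n}$, $(\Delta,p)=1$, and $p$ is odd. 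Geometrically this says the tangent direction at $(\alpha,\beta)$ is never an asymptotic direction mod $p$; algebraically it forces $\lambda'$ to be a \emph{unit} precisely at the critical slopes (those with $\lambda\equiv0\bmod p$), so two distinct slopes cannot yield the same point unless that point is $(\alpha,\beta)$ itself, which is the image of the unique tangent parameter. Injectivity plus the count of Step 2 then yields the bijection onto all $N_{p^n}(q)$ solutions.

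The step I expect to be the main obstacle is this injectivity analysis near the base point $(\alpha,\beta)$. Because $\mathbb{Z}/p^n\mathbb{Z}$ is not a field, the ``two points determine a line'' reasoning underlying Proposition \ref{para} breaks down: for a solution congruent to $(\alpha,\beta)$ modulo $p^m$ the joining direction $(t:p^s)$ is only recovered modulo $p^{\,n-m}$, so in principle many parameters could cluster onto one point. Resolving this is exactly where the displayed discriminant identity is needed, as it guarantees that the slope-to-point map is unramified at the critical slopes and thereby rules out the apparent collisions; the routine but careful valuation bookkeeping to turn this into distinctness across and within the families is the remaining technical work.
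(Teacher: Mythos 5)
Your overall architecture coincides with the paper's: show each parametrized pair solves the congruence, count the index set, prove injectivity, and deduce surjectivity by comparing with the cardinality $p^{n-1}\bigl(p-\bigl(\tfrac{b^2-4ac}{p}\bigr)\bigr)$ from Proposition \ref{nofroot}. Your Steps 1 and 2 are correct and match the paper's proof; in fact your Step 1 is \emph{more} faithful to the stated hypothesis, since the paper instead replaces $(\alpha,\beta)$ by an exact $\mathbb{Z}_p$-zero of $q$ (obtained by Hensel lifting via Proposition \ref{nofroot}) and then reduces the $\mathbb{Q}_p$-parametrization of Proposition \ref{para} modulo $p^n$, whereas your exact Taylor computation uses only $q(\alpha,\beta)\equiv 0\bmod{p^n}$.

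The genuine gap is in Step 3, and it sits precisely in the degenerate cases you flag but do not resolve. First, the valuation gap $g=s-v_p(t)$ is an invariant of the $\mathbb{Z}_p$-point, not of its reduction mod $p^n$; it fails to separate the families as soon as $v_p(\lambda)$, where $\lambda:=(At+Bp^s)(at^2+btp^s+cp^{2s})^{-1}$, is large enough that one coordinate difference vanishes mod $p^n$. This happens at points \emph{other} than $(\alpha,\beta)$: for example, if $v_p(A)=2$ and $n=4$, every point of the family $s=2$ has $\tilde y\equiv\beta\bmod{p^4}$ while generically $\tilde x\not\equiv\alpha$, so its ``gap'' is not $2$ and such points must be compared by hand against the families $s=1,3,4$; your closing paragraph concedes the problem only for points congruent to $(\alpha,\beta)$, which is a strictly smaller set. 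Second, the claim that the identity forces $\lambda'$ to be a unit ``precisely at the critical slopes'' is correct only for $s=0$ (there $At_0+B\equiv 0$ gives $A^2(at_0^2+bt_0+c)\equiv-4\Delta$, so the numerator of $\lambda'$ is a unit); for $s\ge 1$ criticality means $p\mid A$, it holds for all admissible $t$ simultaneously, and the coordinate derivatives carry an extra factor $p^s$. What actually rescues injectivity there is that the divided differences of $t\mapsto t\lambda_s(t)$ and $t\mapsto p^s\lambda_s(t)$ equal $p^s$ times quantities congruent mod $p$ to $(bA-aB)t_1t_2$ and $-aAt_1t_2$ (up to unit denominators), which cannot both vanish since $(A,B)\not\equiv(0,0)\bmod p$; for $s=0$ the analogous computation plus your identity shows simultaneous vanishing would force $t_1,t_2$ to be roots of $at^2+bt+c\equiv 0\bmod p$, which are excluded. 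So your key identity $aB^2-bAB+cA^2\equiv-4\Delta$ is indeed the decisive input, but the within-family argument for $s\ge1$ and the cross-family degenerate collisions still have to be written out, and they are case analysis, not routine bookkeeping. It is worth noting that the paper's own proof is silent on exactly the same point: its ``short calculation'' leading to \eqref{ok} amounts to cancelling $\lambda_1$ from $\lambda_1\bigl(t_1p^{s_2}-t_2p^{s_1}\bigr)\equiv 0\bmod{p^n}$, which is legitimate only when $At_1+Bp^{s_1}$ is a unit. You have therefore correctly located a weakness that the published argument glosses over, but your sketch, as written, does not yet close it.
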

\begin{proof}
By Proposition \ref{para}, the $\mathbb{Q}_p$-rational points $(x,y)$ on the conic $q(x,y)$ are parametrized as  
 in \eqref{parr} and $q(\alpha,\beta)=0$ (in particular, $q(\alpha,\beta)=0 \bmod{p^n}$). This equation is soluble in $(\alpha,\beta)$ as a consequence of Proposition \ref{nofroot}.
Noting that $\tilde{x}(t/p^s)$ and $\tilde{y}(t/p^s)$ are $p$-adic integers, we will view $\tilde{x}(t/p^s)$ and $\tilde{y}(t/p^s)$ as elements of $\mathbb{Z}/p^n\mathbb{Z}$. 

It can be seen that the pairs $\left(\tilde{x}(t/p^s)\right),\tilde{y}(t/p^s)$ in the above sets $M_s$ ($s=0,...,n$) are distinct and $M_i\cap M_j=\emptyset$ for $i\neq j$ by the following argument.  
Suppose that
$$
\left(\tilde{x}\left(\frac{t_1}{p^{s_1}}\right),\tilde{y}\left(\frac{t_1}{p^{s_1}}\right)\right)= \left(\tilde{x}\left(\frac{t_2}{p^{s_2}}\right),\tilde{y}\left(\frac{t_2}{p^{s_2}}\right)\right),
$$
which implies 
\begin{equation*}
\begin{split}
\alpha-t_1\frac{At_1+Bp^{s_1}}{at_1^2+bt_1p^{s_1}+cp^{2s_1}}=&\alpha-t_2\frac{At_2+Bp^{s_2}}{at_2^2+bt_2p^{s_2}+cp^{2s_2}},\\
\beta-p^{s_1}\frac{At_1+Bp^{s_1}}{at_1^2+bt_1p^{s_1}+cp^{2s_1}}=&\beta-p^{s_2}\frac{At_2+Bp^{s_2}}{at_2^2+bt_2p^{s_2}+cp^{2s_2}}.
\end{split}
\end{equation*}
Then a short calculation gives
\begin{equation} \label{ok} t_2p^{s_1}\equiv t_1p^{s_2}\bmod{p^n}.
\end{equation}
So if $0\leq s_1=s_2\leq n$ then $p^{s_1}(t_2-t_1)\equiv 0\bmod{p^n}$, which implies $t_2-t_1\equiv0\bmod{p^{n-s_1}}$. Hence $t_2=t_1$ because $1\leq t_1,t_2\leq p^{n-s_1}$.
If $s_2>s_1$ then we have $t_2\equiv 0 \bmod {p^{s_2-s_1}}$, which contradicts the fact that $t_2\not\equiv 0\bmod{p}$.
Now since $a\not\equiv 0\bmod{p}$, the number of solutions of $at^2+bt+c\equiv0\bmod{p}$ in the range $1\leq t\leq p^n$ is $\left(1+\left(\frac{b^2-4ac}{p}\right)\right)p^{n-1}.$\\
Therefore, we get that $|M_0|=\left(p-1-\left(\frac{b^2-4ac}{p}\right)\right)p^{n-1},|M_n|=1$, $|M_s|=p^{n-s}-p^{n-s-1}$ for $s=1,2...n-1$.
Thus
\begin{equation*}
    \begin{split}
        |M|&=\left|\bigcup_{s=0}^nM_s\right|\\
        &=\left(p-1-\left(\frac{b^2-4ac}{p}\right)\right)p^{n-1}+(p^{n-1}-p^{n-2})+....+(p^2-p)+(p-1)+1=\left(p-\left(\frac{b^2-4ac}{p}\right)\right)p^{n-1}.
    \end{split}
\end{equation*}
Now Proposition \ref{nofroot} tells us that $M=\bigcup_{s=0}^nM_n$ is a complete set of solutions of $q(x,y)\equiv 0$ mod $p^n$.\\
\end{proof}
\begin{Proposition}[Poisson summation formula] \label{Poisson} Let $\Phi : \mathbb{R}\rightarrow \mathbb{R}$ be a Schwartz class function, $\hat\Phi$ its Fourier transform. Then
$$
\sum\limits_{n\in \mathbb{Z}} \Phi(n)=\sum\limits_{n\in \mathbb{Z}} \hat\Phi(n).
$$
\end{Proposition}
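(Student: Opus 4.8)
The plan is to prove the identity via the classical periodization argument. First I would form the periodized function
$$
F(x) := \sum_{m \in \mathbb{Z}} \Phi(x+m),
$$
and observe that, since $\Phi$ lies in the Schwartz class, the decay estimate $|\Phi(x+m)| \ll_k (1+|x+m|)^{-k}$ valid for every $k\in\mathbb{N}$ guarantees that this series, together with all of its termwise derivatives, converges absolutely and uniformly on compact subsets of $\mathbb{R}$. Hence $F$ is a well-defined, infinitely differentiable function that is periodic of period $1$.

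Next I would expand $F$ in its Fourier series. Because $F$ is smooth and $1$-periodic, we may write
$$
F(x) = \sum_{k \in \mathbb{Z}} c_k\, e(kx), \qquad c_k = \int_0^1 F(x)\, e(-kx)\, \dif x,
$$
where the rapid decay of the Fourier coefficients of a smooth periodic function ensures that this series converges absolutely and represents $F$ pointwise everywhere. The crucial computation is to identify $c_k$ with $\hat\Phi(k)$. Inserting the definition of $F$ and interchanging summation with integration (justified by the uniform convergence above), I would unfold the integral over the whole real line using $e(-k(x+m)) = e(-kx)$ for $m \in \mathbb{Z}$ to glue the pieces $[m,m+1)$ together:
$$
c_k = \sum_{m \in \mathbb{Z}} \int_0^1 \Phi(x+m)\, e(-kx)\, \dif x = \int_{\mathbb{R}} \Phi(x)\, e(-kx)\, \dif x = \hat\Phi(k).
$$

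Finally I would evaluate the two representations of $F$ at the single point $x=0$. On one hand, the definition gives $F(0) = \sum_{m \in \mathbb{Z}} \Phi(m)$; on the other hand, the Fourier expansion gives $F(0) = \sum_{k \in \mathbb{Z}} c_k = \sum_{k \in \mathbb{Z}} \hat\Phi(k)$. Comparing the two representations yields the desired formula.

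The only genuinely delicate points here are analytic rather than structural: one must justify the interchange of the infinite sum with the integral in the computation of $c_k$ (for which absolute convergence coming from the Schwartz decay suffices, via dominated convergence or Fubini), and the pointwise convergence of the Fourier series of $F$ at $x=0$. The latter is precisely where the Schwartz hypothesis is essential, since it forces $\hat\Phi$ itself to be a Schwartz function, so that $\sum_k \hat\Phi(k)$ converges absolutely and the Fourier series genuinely equals $F$ at every point, including $x=0$. I expect this convergence bookkeeping to be the main (and essentially the only) obstacle; no deeper idea is required.
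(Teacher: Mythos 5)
Your proof is correct. The paper does not prove this proposition itself but simply cites Stein--Shakarchi \cite{StSa}, and the proof given there is precisely your periodization argument (form $F(x)=\sum_m\Phi(x+m)$, identify its Fourier coefficients with $\hat\Phi(k)$, evaluate at $x=0$), with the same analytic justifications you indicate, so your write-up matches the intended proof in both substance and detail.
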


\begin{proof}
See \cite[section 3, chapter 5]{StSa}. 
\end{proof}

\begin{Proposition}[Evaluation of exponential sums with rational functions] \label{Expsums}
Let $p>2$ be a prime, $n\ge 2$ be a natural number and $f=F_1/F_2$ be a rational function where $F_1,F_2\in \mathbb{Z}[x]$. For a polynomial $G$ over $\mathbb{Z}$, let $\mbox{ord}_p(G)$ be the largest power of $p$ dividing all of the coefficients of $G$, and for a rational function $g=G_1/G_2$ with $G_1$ and $G_2$ polynomials over $\mathbb{Z}$, let $\mbox{ord}_p(g) := \mbox{ord}_p(G_1)-\mbox{ord}_p(G_2)$. Set
$$
r:=\mbox{ord}_p(f'),
$$
and 
$$
S_{\alpha}(f;p^n):=\sum\limits_{\substack{x=1\\ x\equiv \alpha \bmod{p}}}^{p^n} e_{p^n}(f(x)), 
$$
where $\alpha\in \mathbb{Z}$.
Then we have the following if $r\le n-2$ and $(F_2(\alpha),p)=1$.\medskip\\
(i) If $p^{-r}f'(\alpha)\not\equiv 0\bmod{p}$, then $S_{\alpha}(f,p^n) = 0$.\medskip\\
(ii) If $\alpha$ is a root of $p^{-r}f'(x)\equiv 0\bmod{p}$ of multiplicity one, then
$$
S_{\alpha}(f;p^n) =\begin{cases} e_{p^n}\left(f(\alpha^{\ast})\right)p^{(n+r)/2} & \mbox{ if } n-r \mbox{ is even,}\\
e_{p^n}\left(f(\alpha^{\ast})\right)p^{(n+r)/2}\left(\frac{A(\alpha)}{p}\right)\cdot \frac{G_p}{\sqrt{p}} & \mbox{ if } n-r \mbox{ is odd,}
\end{cases}
$$
where $\alpha^{\ast}$ is the unique lifting of $\alpha$ to a solution of the congruence $p^{-r}f'(x) \equiv 0 \bmod p^{[(n-r+1)/2]}$ and 
$$
A(\alpha):=2p^{-r}f''(\alpha^{\ast}).
$$
\end{Proposition}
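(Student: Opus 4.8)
The plan is to evaluate $S_{\alpha}(f;p^n)$ by the $p$-adic stationary phase (completion of the sum) method, following Cochrane \cite{CoZ}. The key preliminary observation is that the hypothesis $(F_2(\alpha),p)=1$ forces $F_2(x)$ to be a $p$-adic unit for every $x\equiv\alpha\bmod p$, so that $f=F_1/F_2$ is $p$-adically analytic on this residue disc and admits a Taylor expansion there with $p$-integral coefficients. This is precisely what lets us manipulate the rational function $f$ exactly as if it were a polynomial, and it is the reason the hypothesis $(F_2(\alpha),p)=1$ is imposed. Throughout I write $g:=p^{-r}f'$, a rational function of content prime to $p$, whose zeros are the critical points of the phase.

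The first main step is the completion (linearization) step. Splitting $x=y+p^{n-k}z$ for a suitably chosen parameter $k$ in an admissible range (which the hypothesis $r\le n-2$ guarantees), with $y$ running over residues $\equiv\alpha\bmod p$ modulo $p^{n-k}$ and $z$ modulo $p^k$, and Taylor expanding so that the quadratic and higher terms vanish modulo $p^n$, the inner sum collapses to a complete additive character sum $\sum_{z\bmod p^k}e_{p^k}\!\left(zf'(y)\right)$. By orthogonality this equals $p^k$ when $f'(y)\equiv 0\bmod p^k$ and vanishes otherwise, i.e. it survives exactly on the critical congruence $g(y)\equiv 0 \bmod p^{k-r}$. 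Case (i) then follows at once: if $p^{-r}f'(\alpha)=g(\alpha)\not\equiv 0\bmod p$, then $g(y)\equiv g(\alpha)\not\equiv 0\bmod p$ for every $y\equiv\alpha\bmod p$, no residue satisfies the critical congruence, and hence $S_{\alpha}(f;p^n)=0$.

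For case (ii), the simple-root hypothesis allows Hensel's lemma to lift $\alpha$ to a unique solution $\alpha^{\ast}$ of $g(x)\equiv 0$ modulo the relevant power of $p$; this is exactly the stationary point in the statement, pinned down modulo $p^{[(n-r+1)/2]}$. Localizing the surviving sum to a disc about $\alpha^{\ast}$ and Taylor expanding $f$ to second order, the linear term drops out because $\alpha^{\ast}$ is a critical point (so $f'(\alpha^{\ast})$ is divisible to high order), leaving the constant contribution $e_{p^n}\!\left(f(\alpha^{\ast})\right)$ times a quadratic Gauss sum whose quadratic coefficient is governed by $f''(\alpha^{\ast})$, i.e. by $A(\alpha)=2p^{-r}f''(\alpha^{\ast})$. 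Evaluating this Gauss sum through the classical formula \eqref{Fd} yields the clean power $p^{(n+r)/2}$ when $n-r$ is even and the extra factor $\left(\frac{A(\alpha)}{p}\right)\cdot\frac{G_p}{\sqrt p}$ when $n-r$ is odd, matching the two cases of the statement.

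The main obstacle is the precise bookkeeping of powers of $p$. One must choose the splitting parameters so that the higher-order Taylor terms genuinely vanish modulo $p^n$ while the quadratic term is retained exactly in the critical regime, and one must track how the content $r=\mbox{ord}_p(f')$ shifts the modulus of the resulting Gauss sum. The parity of $n-r$ enters solely through \eqref{Fd}, and the delicate point is verifying that the exponent $(n+r)/2$ is consistent with lifting $\alpha^{\ast}$ to precision $p^{[(n-r+1)/2]}$: the two must be calibrated so that the truncated quadratic phase is an honest quadratic Gauss sum of the stated modulus and no residual error survives. The hypotheses $r\le n-2$ and $(F_2(\alpha),p)=1$ are exactly what make this calibration possible.
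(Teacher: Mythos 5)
Your proposal is correct in substance, but it is worth being clear about what it is being compared to: the paper does not prove this proposition at all — its entire ``proof'' is the citation \cite[Theorem 3.1(iii)]{CoZ}. So your argument is necessarily a different route from the paper's, though it is in fact a faithful reconstruction of the route taken in the cited source itself: the $p$-adic stationary phase (completing-the-sum) method, with the splitting $x=y+p^{n-k}z$, orthogonality in $z$ forcing the critical congruence $p^{-r}f'(y)\equiv 0$, Hensel lifting of the simple root, and a residual quadratic Gauss sum producing the dichotomy in the parity of $n-r$. Your reading of the hypotheses is also the right one: $(F_2(\alpha),p)=1$ makes $f$ expand as a power series with $p$-integral coefficients on the disc $x\equiv\alpha\bmod p$, and $r\le n-2$ is exactly what makes an admissible splitting exponent exist (one needs roughly $r+1\le k\le\lfloor (n+r)/2\rfloor$, and $\lfloor (n+r)/2\rfloor\ge r+1$ is equivalent to $n\ge r+2$). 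The trade-off between the two routes is this: the citation outsources to \cite{CoZ} precisely the part you defer as ``bookkeeping,'' and that bookkeeping is the real content of the theorem — the choice of $k$ in each parity case, the control of factorial denominators in the Taylor coefficients (relevant for $p=3$), and the structural point that for $n-r$ even orthogonality alone finishes the evaluation (no Gauss sum appears; all surviving terms equal $e_{p^n}\left(f(\alpha^{\ast})\right)$ and one gets $p^{(n+r)/2}$ exactly), whereas for $n-r$ odd the surviving $y$ form a one-parameter shell $\beta+p^{(n-r-1)/2}v$, $v\bmod p$, and completing the square in the resulting Gauss sum $\sum_{v}e_p\left(hv+\frac{1}{2}p^{-r}f''(\beta)v^2\right)$ is what simultaneously produces the finer lift $\alpha^{\ast}$ and the factor $\left(\frac{A(\alpha)}{p}\right)G_p/\sqrt{p}$. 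Your sketch names these checkpoints accurately and they do all go through, so the plan is sound; but as written it is an outline of Cochrane--Zheng's proof rather than a self-contained replacement for the citation, and to serve as one it would need those power-of-$p$ computations carried out explicitly.
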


\begin{proof} This is \cite[Theorem 3.1(iii)]{CoZ}.
\end{proof}
\begin{Proposition}\label{Hbb}
    Let $Q\in\mathbb{Z}[x,y,z]$ be a non-singular quadratic form with the associated matrix $M$. Let $\Delta_Q=|\text{det} M|$ and write $\delta_Q$ for the the highest common factor of the $2\times 2$ minors of $M$. Then
    $$
    \#\left\{(x,y,z)\in\mathbb{Z}^3:Q(x,y,z)=0,~\rm{gcd}(x,y,z)=1,~\rm{max}\{|x|,|y|,|z|\}\leq B\right\}\ll \tau(|\Delta_Q|)\left(1+\frac{B\delta_Q^{\frac{1}{2}}}{|\Delta_Q|^{\frac{1}{3}}}\right)
    .$$
\end{Proposition}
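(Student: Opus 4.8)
The statement is a uniform upper bound for the number of primitive integer zeros of bounded height of a non-singular ternary quadratic form, that is, for primitive points of height at most $B$ on the projective plane conic $C:Q=0$. The plan is to prove it by Heath-Brown's determinant method, using the parametrization of Proposition \ref{para} to organize the points of large height. First I would reduce to the essential case: if $C$ has no rational point the count is zero, so assume $C(\mathbb{Q})\neq\emptyset$; homogeneity lets me restrict to primitive vectors $(x,y,z)$, and I record that the two governing invariants are $\Delta_Q=|\det M|$ and the content $\delta_Q$ of the $2\times 2$ minors of $M$ (equivalently the content of the adjugate form), which measure respectively the ``size'' and the $p$-adic degeneracy of the conic.

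The core of the argument is to show that all primitive zeros in the box $\max\{|x|,|y|,|z|\}\le B$ lie on a controlled number of auxiliary plane curves not containing $C$. Fix an auxiliary degree $d$ and consider the $\binom{d+2}{2}$ monomials of degree $d$ in $x,y,z$; their restrictions to $C$ span a space of dimension $2d+1$, since the degree-$d$ part of the ideal $(Q)$ has dimension $\binom{d}{2}$ and $\binom{d+2}{2}-\binom{d}{2}=2d+1$. I would partition the points according to archimedean position in the box and according to residues modulo a suitable modulus, so that within each class any $2d+1$ points are mutually close. Forming the $(2d+1)\times(2d+1)$ matrix of monomial evaluations at such points, its determinant is a nonzero integer that is simultaneously divisible by a large power of the modulus (because the points agree there) and bounded in absolute value by the product of the spacings; when the forced divisibility exceeds the size, the determinant must vanish, which means the points satisfy an auxiliary equation $G=0$ with $\deg G=d$ and $Q\nmid G$. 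By B\'ezout each such $G$ meets $C$ in at most $2d$ points, so the count is $2d$ times the number of classes needed, which I would then optimize over $d$.

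The dependence on $\Delta_Q$ and $\delta_Q$ enters entirely through these two estimates. The archimedean size of the determinant is governed by the coefficients of the quadratic parametrization $\phi:\mathbb{P}^1\to C$ of Proposition \ref{para}, which are bounded in terms of $\Delta_Q$; the $p$-adic valuations and the number of admissible residue classes are governed by $\delta_Q$, since $\delta_Q$ measures the common $p$-divisibility of the minors through which the points spread out. Heuristically, balancing the number of monomials $\binom{d+2}{2}\sim d^2/2$ against the available volume produces the cube-root scaling responsible for the factor $|\Delta_Q|^{-1/3}$, while the coset count contributes $\delta_Q^{1/2}$; summing over the finitely many bad primes and over the divisors arising in the content book-keeping yields the divisor factor $\tau(|\Delta_Q|)$, and the summand $1$ records the regime $B\ll |\Delta_Q|^{1/3}\delta_Q^{-1/2}$, where only $O_\varepsilon(|\Delta_Q|^\varepsilon)$ points can occur.

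The main obstacle I expect is making all of this dependence uniform and explicit: one must estimate both the size and the exact $p$-adic divisibility of the monomial determinant in terms of the minors of $M$ (not merely in terms of the sup-norm of the coefficients of $Q$), and then carry out the delicate book-keeping over congruence classes and over divisors of $\Delta_Q$ so that it collapses cleanly into $\tau(|\Delta_Q|)$ and $\delta_Q^{1/2}$. Once these determinant estimates are secured, the optimization over $d$ together with the B\'ezout bound assembles into the stated inequality; this is precisely the content of Heath-Brown's Corollary 2 in \cite{Hb}, which I would ultimately invoke.
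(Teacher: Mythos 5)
The paper's entire proof of this proposition is the single line ``This is \cite[corollary 2]{Hb}'', and your proposal ultimately rests on exactly the same invocation, so you have taken essentially the same (and the only reasonable) approach. Your intervening sketch of the determinant method is a fair heuristic for how such bounds are proved but is not needed and is not verified at the level of the precise $\tau(|\Delta_Q|)$, $\delta_Q^{1/2}$, $|\Delta_Q|^{-1/3}$ dependence; the citation alone carries the proof, just as in the paper.
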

\begin{proof} This is \cite[corollary 2]{Hb}.
\end{proof}
\section{Proof of Theorem \ref{mainresult}}
\subsection{Double Poisson summation}\label{fdob} We start by writing 
\begin{equation*}
\begin{split}
T= & \sum\limits_{\substack{(x,y,z)\in \mathbb{Z}^3\\ (z,p)=1\\ Q(x,y,z)\equiv 0 \bmod{p^n}}} \Phi\left(\frac{x-x_0}{N}\right)\Phi\left(\frac{y-y_0}{N}\right)\Phi\left(\frac{z-z_0}{N}\right)\\
=& \sum\limits_{(z,p)=1} \Phi\left(\frac{z-z_0}{N}\right)\sum\limits_{\substack{\tilde{x},\tilde{y}\bmod{p^n}\\ q(\tilde{x},\tilde{y})\equiv 0\bmod{p^n}}} \sum\limits_{\substack{x\equiv \tilde{x}z\bmod{p^n}\\ y\equiv \tilde{y}z\bmod{p^n}}} \Phi\left(\frac{x-x_0}{N}\right)\Phi\left(\frac{y-y_0}{N}\right).
\end{split}
\end{equation*}
Now we apply Poisson summation, Proposition \ref{Poisson}, after a linear change of variables to the inner double sum over $x$ and $y$, obtaining
\begin{equation*}
T= \frac{N^2}{p^{2n}}\sum\limits_{(z,p)=1} \Phi\left(\frac{z-z_0}{N}\right)\sum\limits_{(k_1,k_2)\in \mathbb{Z}^2} \hat{\Phi}\left(\frac{k_1N}{p^n}\right)\hat{\Phi}\left(\frac{k_2N}{p^n}\right)e_{p^n}\left(-k_1x_0-k_2y_0\right)\sum\limits_{\substack{\tilde{x},\tilde{y}\bmod{p^n}\\q(\tilde{x},\tilde{y})\equiv 0\bmod{p^n}}} e_{p^n}\left(k_1z\tilde{x}+k_2z\tilde{y}\right).
\end{equation*}
Using the parametrization in Proposition \ref{Par}, we deduce that
\begin{equation*}
T= \frac{N^2}{p^{2n}}\sum\limits_{(z,p)=1} \Phi\left(\frac{z-z_0}{N}\right)\sum\limits_{(k_1,k_2)\in \mathbb{Z}^2} \hat{\Phi}\left(\frac{k_1N}{p^n}\right)\hat{\Phi}\left(\frac{k_2N}{p^n}\right)e_{p^n}\left(-k_1x_0-k_2y_0\right)\sum_{s=0}^n\sum\limits_{\substack{(\tilde{x},\tilde{y})\in M_s}} e_{p^n}\left(k_1z\tilde{x}+k_2z\tilde{y}\right).
\end{equation*}
We decompose $T$ into 
\begin{equation} \label{divide}
T=T_0+U,
\end{equation}
where $T_0$ is the main term contribution of $(k_1,k_2)=(0,0)$. Hence,
\begin{equation*}
\begin{split}
T_0&= \hat{\Phi}(0)^2\cdot \frac{N^2}{p^{2n}}\sum\limits_{(z,p)=1} \Phi\left(\frac{z-z_0}{N}\right) \cdot\left|M\right|\\
 &=\hat{\Phi}(0)^2\cdot \frac{N^2}{p^{2n}}\sum\limits_{(z,p)=1} \Phi\left(\frac{z-z_0}{N}\right) \cdot p^{n-1}(p-s_p(Q)),
\end{split}
\end{equation*}
where $s_p(Q)$ is defined as in \eqref{sdef} and $M$ is defined as in Proposition \ref{Par}. \\
If $N\ge p^{n\varepsilon}$ for any fixed $\varepsilon>0$, then the term $T_0$ can be simplified as
\begin{equation} \label{T0I}
\begin{split}
T_0= & \hat{\Phi}(0)^2 \cdot \frac{p-s_p(Q)}{p}\cdot \frac{N^2}{p^n} \cdot \left(\sum\limits_{z} \Phi\left(\frac{z-z_0}{N}\right) -\sum\limits_{z} \Phi\left(\frac{pz-z_0}{N}\right)\right)\\
= & \hat{\Phi}(0)^2 \cdot \frac{p-s_p(Q)}{p}\cdot \frac{N^2}{p^n}\cdot \left(N\cdot \frac{p-1}{p}\cdot \hat\Phi(0)
+\sum\limits_{w\in \mathbb{Z}\setminus\{0\}} \left(N\hat\Phi(Nw)\cdot e(-wz_0)-\frac{N}{p}\cdot \hat\Phi\left(\frac{Nw}{p}\right)\cdot e_p(-wz_0)\right)\right)\\
= & \hat{\Phi}(0)^3 \cdot \frac{(p-s_p(Q))(p-1)}{p^2}\cdot \frac{N^3}{p^{n}}\cdot\left(1+o(1)\right)=\hat{\Phi}(0)^3 \cdot C_p(Q)\cdot \frac{N^3}{p^{n}}\cdot\left(1+o(1)\right)
\end{split}
\end{equation}
as $n\rightarrow\infty$, 
where we again use Poisson summation for the sums over $z$ above and the rapid decay of $\hat\Phi$. 

\subsection{Evaluation of exponential sums}
Now we look at the error contribution
\begin{equation} \label{2errorcont}
U= \frac{N^2}{p^{2n}}\sum\limits_{(z,p)=1} \Phi\left(\frac{z-z_0}{N}\right)\sum\limits_{(k_1,k_2)\in \mathbb{Z}^2\setminus \{(0,0)\}} \hat{\Phi}\left(\frac{k_1N}{p^n}\right)\hat{\Phi}\left(\frac{k_2N}{p^n}\right)e_{p^n}\left(-k_1x_0-k_2y_0\right)\cdot E\left(k_1,k_2,z;p^n\right)
\end{equation}
with 
\begin{equation*}
\begin{split}
E\left(k_1,k_2,z;p^n\right):=\sum_{s=0}^n\sum\limits_{(\tilde{x},\tilde{y})\in M_s} e_{p^n}\left(z(k_1\tilde{x}+k_2\tilde{y})\right).
\end{split}
\end{equation*}
Assume that
$$
(k_1,k_2,p^n)=p^r.
$$
Set
$$
l_1:=\frac{k_1}{p^r}, \quad l_2:=\frac{k_2}{p^r}.
$$
The contribution of $r=n-1,n$ to the right-hand side of \eqref{2errorcont} is $O_{\varepsilon}(1)$ if $N\ge p^{n\varepsilon}$ by the rapid decay of $\hat\Phi$ since $(k_1,k_2)=(0,0)$ is excluded from the summation. In the following, we assume that $r\le n-2$ so that Proposition \ref{Expsums} is applicable.

Let $$
f_{s,k_1,k_2}(t)=z\left(k_1\tilde{x}_s(t)+k_2\tilde{y}_s(t)\right),
$$ where $\tilde{x}_s(t)=\tilde{x}\left(\frac{t}{p^s}\right),\tilde{y}_s(t)=\tilde{y}\left(\frac{t}{p^s}\right)$ and $\tilde{x}(t),\tilde{y}(t)$ is defined in  \eqref{parr}.
We have $f_{s,k_1,k_2}(t)\equiv f_{s,k_1,k_2}(t+wp^{n-s})\bmod{p^n}$ for $w=1,2...,p^{s}$.\\ 
So we deduce 
\begin{equation}\label{newsum}
\begin{split}
E\left(k_1,k_2,z;p^n\right):=\sum_{s=0}^n\frac{1}{p^s}\sum\limits_{\substack{t=1\\(at^2+btp^s+cp^{2s})\not\equiv 0~~\text{mod}~~p}}^{p^n} e_{p^n}\left( f_{s,k_1,k_2}(t)\right).
\end{split}
\end{equation}
The derivative of the amplitude function turns out to be
\begin{equation*}
\begin{split}
f'_{s,k_1,k_2}(t)&=z\frac{k_1\left((aB-bA)t^2p^{2s}-2Actp^{3s}-Bcp^{4s})\right)+k_2\left(aAt^2p^{2s}+2aBtp^{3s}+(Bb-Ac)p^{4s}\right)}{(at^2+btp^s+cp^{2s})^2 }\cdot\frac{1}{p^s}\\
&=zp^s\frac{\left(k_1(aB-bA)+k_2aA\right)t^2+2(aBk_2-cAk_1)tp^{s}+\left((bB-cA)k_2-cBk_1\right)p^{2s}}{(at^2+btp^s+cp^{2s})^2}.
\end{split}
\end{equation*}
For $ (aB-bA)l_1+aAl_2\neq 0,$ we define $$
r'=r'(l_1,l_2):=\frac{\ln(\mbox{ord}_p((aB-bA)l_1+aAl_2))}{\ln{p}}
.$$
Note that 
$$
aB^2-bAB+cA^2=-4\Delta.
$$
So 
$$cA((aB-bA)k_1+aAk_2)+(aB-bA)(aBk_2-cAk_1)=-4a\Delta k_2,$$
and 
$$B((aB-bA)k_1+aAk_2)-A(aBk_2-cAk_1)=-4\Delta k_1.$$
Thus, it can be shown that $\left(k_1,k_2,p^n\right)=\left(\left((aB-bA)k_1+aAk_2)\right),(aBk_2-cAk_1),p^n\right)$ as $ a\Delta\not\equiv0\bmod{p}.$\\Therefore,
$$\mbox{ord}_p(f_{s,k_1,k_2}')=
\begin{cases}
 p^{2s+r} & \text{if}~~  r'\geq s, \\
 p^{s+r+r'} & \text{if} ~~ r'<s.
\end{cases}$$
We split the second sum over $t$ on the right-hand side of \eqref{newsum} into
\begin{equation*}
\begin{split}
\sum\limits_{\substack{t=1\\(at^2+btp^s+cp^{2s})\not\equiv 0~~\text{mod}~~p}}^{p^n} e_{p^n}\left( f_{s,k_1,k_2}(t)\right)=\sum\limits_{\substack{\alpha=1\\(ap^{2s}+b\alpha p^s+c\alpha^2)\not\equiv 0~~\text{mod}~~p}}^p S_{\alpha}\left(f_{s,k_1,k_2};p^n\right),
\end{split}
\end{equation*}
where $$
S_{\alpha}(f_{s,k_1,k_2};p^n):=\sum\limits_{\substack{t=1\\ t\equiv \alpha \bmod{p}}}^{p^n} e_{p^n}(f_{s,k_1,k_2}(t)). 
$$
We see that if $r'=0$ and $s>0$ then $\mbox{ord}_p(f_{s,k_1,k_2}')^{-1}\cdot f'_{s,k_1,k_2}(t)\equiv 0\bmod{p}$ implies $t\equiv0\bmod{p}$. Then using Proposition \ref{Expsums}, we have $S_{\alpha}=0$ if $\alpha\neq0$, which implies that 
\begin{equation*}
 \sum_{s=1}^n\frac{1}{p^s}\sum\limits_{\substack{t=1\\(at^2+btp^s+cp^{2s})\not\equiv 0~~\text{mod}~~p}}^{p^n} e_{p^n}\left( f_{s,k_1,k_2}(t)\right)=0.
\end{equation*}
If $r'\neq 0$ and $s\neq r'$ then $\mbox{ord}_p(f_{s,k_1,k_2}')^{-1}\cdot f'_{s,k_1,k_2}(t)\equiv 0\bmod{p}$ implies $t\equiv0\bmod{p}$. Then again using Proposition \ref{Expsums}, we have $S_{\alpha}=0$ if $\alpha\neq0$, which implies that 
\begin{equation*}
 \sum\limits_{\substack{s=0\\s\neq r'}}^n\frac{1}{p^s}\sum\limits_{\substack{t=1\\(at^2+btp^s+cp^{2s})\not\equiv 0~~\text{mod}~~p}}^{p^n} e_{p^n}\left( f_{s,k_1,k_2}(t)\right)=0.
\end{equation*}
Therefore
\begin{align}\label{E}
E(k_1,k_2,z,p^n)=
\begin{cases}
    \sum\limits_{\substack{t=1\\(at^2+bt+c)\not\equiv 0~~\text{mod}~~p}}^{p^n} e_{p^n}\left( f_{0,k_1,k_2}(t)\right) & \text{if } r'=0 ,\\
    \sum\limits_{\substack{t=1\\(at^2+bt+c)\not\equiv 0~~\text{mod}~~p}}^{p^n} e_{p^n}\left( f_{0,k_1,k_2}(t)\right)+E(r') \frac{1}{p^{r'}}\sum\limits_{\substack{t=1\\(at^2+btp^{r'}+cp^{2r'})\not\equiv 0~~\text{mod}~~p}}^{p^n} e_{p^n}\left( f_{r',k_1,k_2}(t)\right)                                                               &  \text{if } r'\neq 0,
\end{cases}
\end{align}
where $$E(r')=\begin{cases}
 1 & \text{if}~~  r'\leq n, \\
 0 & \text{if} ~~ r'>n.
\end{cases}$$
So, we may write $$
U=U_1+U_2+U_3+O_{\varepsilon}(1),
$$ where 
\begin{equation}
    \begin{split}
        U_1&= \frac{N^2}{p^{2n}}\sum\limits_{(z,p)=1} \Phi\left(\frac{z-z_0}{N}\right)\sum\limits_{r=0}^{n-2}\sum\limits_{\substack{(l_1,l_2)\in \mathbb{Z}^2\\ (l_1,l_2,p)=1\\r'(l_1,l_2)= 0 }} \hat{\Phi}\left(\frac{l_1N}{p^{n-r}}\right)\hat{\Phi}\left(\frac{l_2N}{p^{n-r}}\right)e_{p^{n-r}}\left(-l_1x_0-l_2y_0\right) E\left(p^rl_1,p^rl_2,z;p^n\right), \\
        U_2&=\frac{N^2}{p^{2n}}\sum\limits_{(z,p)=1} \Phi\left(\frac{z-z_0}{N}\right)\sum\limits_{r=0}^{n-2}\sum\limits_{\substack{(l_1,l_2)\in \mathbb{Z}^2\\ (l_1,l_2,p)=1\\ (aB-bA)l_1+aAl_2\neq 0\\r'(l_1,l_2)\neq 0}} \hat{\Phi}\left(\frac{l_1N}{p^{n-r}}\right)\hat{\Phi}\left(\frac{l_2N}{p^{n-r}}\right)e_{p^{n-r}}\left(-l_1x_0-l_2y_0\right) E\left(p^rl_1,p^rl_2,z;p^n\right),\\
        U_3&=\frac{N^2}{p^{2n}}\sum\limits_{(z,p)=1} \Phi\left(\frac{z-z_0}{N}\right)\sum\limits_{r=0}^{n-2}\sum\limits_{\substack{(l_1,l_2)\in \mathbb{Z}^2\\ (l_1,l_2,p)=1\\ (aB-bA)l_1+aAl_2= 0}} \hat{\Phi}\left(\frac{l_1N}{p^{n-r}}\right)\hat{\Phi}\left(\frac{l_2N}{p^{n-r}}\right)e_{p^{n-r}}\left(-l_1x_0-l_2y_0\right)E\left(p^rl_1,p^rl_2,z;p^n\right).
    \end{split}
\end{equation}
Set 
$$
D:=(al_2^2-bl_1l_2+cl_1^2)(aB^2-bAB+cA^2) \bmod{p^n},
$$ 
and
 $$
 C_s(t):=\left(l_1(aB-bA)+l_2aA\right)t^2+2(aBl_2-cAl_1)tp^{s}+\left((bB-cA)l_2-cBl_1\right)p^{2s}.
 $$

For $r'=0$, we consider the congruence
\begin{equation}\label{2keycong}
 C_0(\alpha)\equiv0\bmod{p^{n-r}.}
 \end{equation}
 If $D$ is a quadratic non-residue modulo $p$, then the congruence \eqref{2keycong} has no solution. So by Proposition \ref{Expsums}, $E\left(p^rl_1,p^rl_2,z;p^n\right)=0.$
 Assuming $D$ to be quadratic residue modulo $p$, we calculate the roots of  the congruence \eqref{2keycong} as
$$
\frac{-(l_2Ba-l_1Ac)\pm\sqrt{D}}{(aB-bA)l_1+aAl_2}\bmod{p^{n-r}},
$$
where $\sqrt{D}$ denotes one of the two roots of the congruence
$$
x^2\equiv D\bmod{p^{n-r}}.
$$
For simplicity, we  fix one root 
$$\alpha^\ast=\frac{-(l_2Ba-l_1Ac)+\sqrt{D}}{(aB-bA)l_1+aAl_2}\bmod{p^{n-r}}.$$
So, we have 
\begin{equation}\label{Den}
    a(\alpha^{\ast})^2+b\alpha^{\ast}+c=\frac{2aD-\sqrt{D}[l_2(2a^2B-abA)+l_1(b^2A-abB-2acA)]}{[(aB-bA)l_1+aAl_2]^2}.
\end{equation}
The congruence \eqref{2keycong} has a double root $\alpha \bmod{p}$ iff $D\equiv 0 \bmod{p}$, and in this case, we get $a\alpha^2+b\alpha+c\equiv 0\bmod{p}$ because of \eqref{Den} and such $\alpha$ is already excluded from the summation. Hence, only the case $D\not\equiv 0\bmod{p}$ occurs in which we have no root if $D$ is a quadratic non-residue modulo $p$ and two roots of multiplicity one if $D$ is a quadratic residue modulo $p$. Therefore,  we may assume from now on that $D\not\equiv 0 \bmod{p}$ and $D$ is a quadratic residue modulo $p$. Then using Proposition \ref{Expsums}, if $\alpha$ satisfies \eqref{2keycong}, we obtain
$$
S_{\alpha}\left(f_{p^rl_1,p^rl_2},p^n\right)=
\begin{cases}
e_{p^n}\left(f_{0,p^rl_1,p^rl_2}(\alpha^{\ast})\right)\cdot p^{(n+r)/2} & \mbox{ if } n-r \mbox{ is even,}\\
e_{p^n}\left(f_{0,p^rl_1,p^rl_2}(\alpha^{\ast})\right)\cdot \left(\frac{A(\alpha)}{p}\right)\cdot \frac{G_p}{\sqrt{p}}\cdot p^{(n+r)/2} & \mbox{ if } n-r \mbox{ is odd,}
\end{cases}
$$
where
$$
A(\alpha)=\frac{2 f_{0,p^rl_1,p^rl_2}''(\alpha)}{p^r}.
$$
Set
$$
J_{l_1,l_2}:=l_1(be-2cd)+l_2(bd-2ae).
$$
Then, a short calculation gives
$$
e_{p^n}\left(f_{0,p^rl_1,p^rl_2}(\alpha^{\ast})\right)=e_{p^{n-r}}\left( z\cdot\overline{4ac-b^2}(J_{l_1,l_2}-2\sqrt{D})\right).
$$
Further, we calculate 
$$ 
f_{0,p^rl_1,p^rl_2}''(\alpha^{\ast})=z\cdot \frac{\sqrt{D}}{(a+b\alpha^{\ast}+c(\alpha^{\ast})^2)^2}\cdot p^{r},
$$
Therefore, 
$$
\left(\frac{A(\alpha)}{p}\right)=\left(\frac{2z\sqrt{D}}{p}\right). 
$$Similarly, for the other root of the congruence \eqref{2keycong}, we get
$$
e_{p^n}\left(f_{0,p^rl_1,p^rl_2}(\alpha^{\ast})\right)=e_{p^{n-r}}\left( z\cdot\overline{4ac-b^2}(J_{l_1,l_2}+2\sqrt{D})\right),
$$
and
$$
\left(\frac{A(\alpha)}{p}\right)=\left(\frac{-2z\sqrt{D}}{p}\right). 
$$
So altogether, we obtain
\begin{equation*} \label{2Uaftereva}
\begin{split}
U_1&= \frac{N^2}{p^{3n/2}}\sum\limits_{r=0}^{n-2} p^{r/2} \sum\limits_{\substack{(l_1,l_2,p)=1\\r'(l_1,l_2)=0\\ D=\Box\bmod{p}}} \hat{\Phi}\left(\frac{l_1N}{p^{n-r}}\right)\hat{\Phi}\left(\frac{l_2N}{p^{n-r}}\right)e_{p^{n-r}}\left(-l_1x_0-l_2y_0\right)\sum\limits_{(z,p)=1} \Phi\left(\frac{z-z_0}{N}\right)C_{n-r}(z,D)\times \\& 
\left(e_{p^{n-r}}\left(z\cdot\overline{4ac-b^2}(J_{l_1,l_2}+2\sqrt{D})\right)+\left(\frac{-1}{p^{n-r}}\right)e_{p^{n-r}}\left(z\cdot\overline{4ac-b^2}(J_{l_1,l_2}-2\sqrt{D})\right)\right)+O_{\varepsilon}(1),
\end{split}
\end{equation*}
where $D=\Box\bmod{p}$ means that $D$ is a quadratic residue modulo $p$ and 
$$
C_{n-r}(z,D):=\begin{cases} 1 & \mbox{ if } n-r \mbox{ is even,}\\ 
\left(\frac{-2z\sqrt{D}}{p}\right)\cdot \frac{G_p}{\sqrt{p}} & \mbox { if } n-r
\mbox{ is odd.} \end{cases}
$$

Now we calculate $E\left(p^rl_1,p^rl_2,z;p^n\right)$ when $r'(l_1,l_2)>0$ from \eqref{E}.
In this case, $\mbox{ord}_p(f'_{0,k_1,k_2})=r$ and $p^{-r}f'_{0,k_1,k_2}(t)\equiv 0\bmod{p}$ implies 
$$
\left((bB-cA)l_2-cBl_1\right)+2(aBl_2-cAl_1)t\equiv 0\bmod{p}.
$$
Therefore, the congruence 
\begin{equation}\label{21keycong}
 C_0(\alpha_1)\equiv 0\bmod{p^{n-r}}
 \end{equation}
 has exactly one root which is given by 
 $$
 \alpha_1^{\ast}=\frac{\frac{-(l_2Ba-l_1Ac)+\sqrt{D}}{p^{r'}}}{\frac{(aB-bA)l_1+aAl_2}{p^{r'}}}\bmod{p^{n-r}},
 $$
 where $\sqrt{D}$ is the root of the congruence 
 $$
x^2\equiv D\bmod{p^{n-r+r'}},
$$
 which satisfies $\sqrt{D}\equiv (l_2Ba-l_1Ac)\bmod{p^{r'}}.$ We now observe that 
 $$
e_{p^n}\left(f_{0,p^rl_1,p^rl_2}(\alpha_1^{\ast})\right)=e_{p^{n-r}}\left( z\cdot\overline{4ac-b^2}(J_{l_1,l_2}-2\sqrt{D})\right),
$$
and
$$
\left(\frac{A(\alpha_1)}{p}\right)=\left(\frac{2z\sqrt{D}}{p}\right).
$$
 We see that $\mbox{ord}_p(f'_{r',k_1,k_2})=r+2r'$ and $p^{-r-2r'}f'_{r'
 ,k_1,k_2}(t)\equiv 0\bmod{p}$ implies 
$$
2(aBl_2-cAl_1)t+\frac{\left((bB-cA)k_2-cBk_1\right)}{p^{r'}}t^2\equiv 0\bmod{p}.
$$
Therefore, the congruence 
\begin{equation}\label{211keycong}
 p^{-2r'}C_{r'}(\alpha_2)\equiv 0\bmod{p^{n-r}}
 \end{equation}
 has two roots, one of them being $\alpha=0,$ which has been excluded from the summation, and the other root is given by 
 $$
 \alpha_2^{\ast}=\frac{-(l_2Ba-l_1Ac)-\sqrt{D}}{\frac{l_2(Bb-Ac)-l_1Bc}{p^{r'}}}\bmod{p^{n-r}.}
 $$
 We now calculate that 
                $$
e_{p^n}\left(f_{r',p^rl_1,p^rl_2}(\alpha_2^{\ast})\right)=e_{p^{n-r}}\left( z\cdot\overline{4ac-b^2}(J_{l_1,l_2}+2\sqrt{D})\right).
$$
We further calculate that 
$$ 
f_{r',p^rl_1,p^rl_2}''(\alpha_2^{\ast})=-z\cdot \frac{\sqrt{D}}{(a+b\alpha^{\ast}+c(\alpha^{\ast})^2)^2}\cdot p^{r+2r'}.
$$
Therefore, 
$$
\left(\frac{A(\alpha_2)}{p}\right)=\left(\frac{-2z\sqrt{D}}{p}\right). 
 $$
So by \eqref{E} if $r'(l_1,l_2)\neq 0,$
\begin{equation}
\begin{split}
    E\left(p^rl_1,p^rl_2,z;p^n\right)&=C_{n-r}(z,D)\times\\ &\left(e_{p-r}(z\cdot\overline{4ac-b^2}(J_{l_1,l_2}-2\sqrt{D}))p^{\frac{(n+r)}{2}}+E(r')\frac{1}{p^{r'}}\left(\frac{-1}{p^{n-r}}\right)e_{p^{n-r}}(z\cdot\overline{4ac-b^2}(J_{l_1,l_2}+2\sqrt{D}))p^{\frac{(n+r+2r')}{2}}\right)\\&=C_{n-r}(z,D)\times \\&\left(e_{p-r}(z\cdot\overline{4ac-b^2}(J_{l_1,l_2}-2\sqrt{D}))+E(r')\left(\frac{-1}{p^{n-r}}\right)e_{p^{n-r}}(z\cdot\overline{4ac-b^2}(J_{l_1,l_2}+2\sqrt{D}))\right)p^{\frac{(n+r)}{2}}.
    \end{split}
    \end{equation}   
    Therefore, $U_1$ and $U_2$ are of almost same form. If $(zD,p)=1$, using \eqref{Fd} we write $C_{n-r}(z,D)$ more compactly as 
$$
C_{n-r}(z,D)= \frac{G_{p^{n-r}}}{p^{(n-r)/2}}\cdot 
\left(\frac{-2z\sqrt{D}}{p^{n-r}}\right).
$$
Now, using Proposition \ref{Expsums} and \eqref{E},  $U_3$ can be written as
\begin{equation*}
    \begin{split}
    U_3=&\frac{N^2}{p^{3n/2}}\sum_{(z,p)=1}\phi\left(\frac{z-z_0}{N}\right)\sum_{r=0}^{n-2}\sum\limits_{\substack{(l,p)=1\\      
 l\in\mathbb{Z} }}\hat{\Phi}\left(\frac{aAlN}{p^{n-r}}\right)\hat{\Phi}\left(\frac{(bA-aB)lN}{p^{n-r}}\right)e_{p^{n-r}}\left(-aAlx_0-(bA-aB)ly_0\right)p^{r/2}\times\\
 &e_{p^{n-r}}\left(l\cdot\frac{z(-16a\Delta+(4ac-b^2)(2af-Ad+ad\alpha+ae\beta))}{4ac-b^2}\right)\cdot\frac{G_{p^{n-r}}}{p^{(n-r)/2}}\cdot 
\left(\frac{az\Delta}{p^{n-r}}\right).
 \end{split}
\end{equation*}
So, trivially
\begin{equation*}
\begin{split}
U_3&\ll\frac{N^2}{p^{3n/2}}\cdot N\cdot\sum_{r=0}^{n-2}\frac{p^{n-r}}{N}\cdot p^{n\varepsilon}\cdot p^{r/2}\\
&\ll \frac{N^2}{p^{n/2}}\cdot p^{n\varepsilon}.
\end{split}    
\end{equation*}
 
 Thus, one can get 
\begin{equation} \label{2compact}
\begin{split}
U\leq & \frac{N^2}{p^{3n/2}}\sum\limits_{r=0}^{n-2} p^{r/2} \cdot \frac{G_{p^{n-r}}}{p^{(n-r)/2}} \cdot \sum\limits_{\substack{D=1\\ D\equiv \Box \bmod{p}}}^{\infty} F_{n-r}(D)  \sum\limits_{z\in \mathbb{Z}} \Phi\left(\frac{z-z_0}{N}\right)\cdot \left(\frac{z}{p^{n-r}}\right) \times \\& 
\left(e_{p^{n-r}}\left(z\cdot\overline{4ac-b^2}(J_{l_1,l_2}+2\sqrt{D})\right)+ \left(\frac{-1}{p^{n-r}}\right)e_{p^{n-r}}\left(z\cdot\overline{4ac-b^2}(J_{l_1,l_2}-2\sqrt{D})\right)\right)+O_{\varepsilon}\left(\frac{N^2}{p^{n(1/2-\varepsilon)}}\right),
\end{split}
\end{equation}
where 
\begin{equation} \label{2FD}
F_{n-r}(D):= \left(\frac{-2\sqrt{D}}{p^{n-r}}\right)\cdot \sum\limits_{\substack{(l_1,l_2,p)=1\\      
 (aB-bA)l_1+aAl_2\neq 0\\ -4\Delta(al_2^2-bl_1l_2+cl_1^2)=D }} \hat{\Phi}\left(\frac{l_1N}{p^{n-r}}\right)\hat{\Phi}\left(\frac{l_2N}{p^{n-r}}\right)e_{p^{n-r}}\left(-l_1x_0-l_2y_0\right).
\end{equation} 
\subsection{Single Poisson summation and final count} 
Now we split the sum over $z$ in \eqref{2compact} into sub-sums over residue classes modulo $p$ and perform Poisson summation, getting
\begin{equation*}
\begin{split}
\sum\limits_{z\in \mathbb{Z}} \Phi\left(\frac{z-z_0}{N}\right)\cdot &\left(\frac{z}{p^{n-r}}\right) \cdot e_{p^{n-r}}\left(z\cdot\overline{4ac-b^2}(J_{l_1,l_2}\pm2\sqrt{D})\right) \\=& 
 \sum\limits_{u=1}^{p} \left(\frac{u}{p^{n-r}}\right)
\sum\limits_{z\equiv u\bmod{p}} \Phi\left(\frac{z-z_0}{N}\right)\cdot
e_{p^{n-r}}\left(z\cdot\overline{4ac-b^2}(J_{l_1,l_2}\pm2\sqrt{D})\right)\\
=&  \frac{N}{p} \sum\limits_{v\in \mathbb{Z}} \left(\sum\limits_{u=1}^{p} \left(\frac{u}{p^{n-r}}\right) \cdot e_{p}\left(uv-z_0\left(v-\frac{\overline{4ac-b^2}(J_{l_1,l_2}\pm2\sqrt{D})}{p^{n-r-1}}\right)\right)\right) 
\times \\& \hat\Phi\left(\frac{N}{p}\left(v-\frac{\overline{4ac-b^2}(J_{l_1,l_2}\pm2\sqrt{D})}{p^{n-r-1}}\right)\right).
\end{split}
\end{equation*}
Using the rapid decay of $\hat\Phi$, the above is $O(N)$ if $||\frac{\overline{4ac-b^2}(J_{l_1,l_2}\pm2\sqrt{D})}{p^{n-r-1}}||\le p^{1+n\varepsilon}N^{-1}$ and negligible otherwise, when $n$ is sufficiently large. We may therefore constraint $\overline{4ac-b^2}(J_{l_1,l_2}\pm2\sqrt{D})$ to the range $[0, p^{n-r})$ and then write $\overline{4ac-b^2}(J_{l_1,l_2}\pm2\sqrt{D})=wp^{n-r-1}+l_3$, where $w=0,...,p-1$ and $|l_3|\le L_r$ with
$$
L_r:=p^{n-r+n\varepsilon}N^{-1}.
$$ 
Moreover, the summations over $l_1$ and $l_2$ in \eqref{2FD} can be cut off at $|l_1|,|l_2|\le L_r$ at the cost of a negligible error if $n$ is large enough. It follows that
\begin{equation*}
\begin{split}
U\ll & \frac{N^3}{p^{3n/2}}\sum\limits_{r=0}^{n-2} p^{r/2} \sum\limits_{w=0}^{p-1} \sum\limits_{\substack{(l_1,l_2,l_3)\in \mathbb{Z}^3\\ |l_1|,|l_2|,|l_3|\le L_r\\ \left((b^2-4ac)(wp^{n-r-1}+l_3)-J_{l_1,l_2}\right)^2\equiv 4D\bmod{p^{n-r}}}} 1 +O_{\varepsilon}\left(\frac{N^2}{p^{n(1/2-\varepsilon)}}\right).
\end{split}
\end{equation*}
Now the congruence above implies 
\begin{equation} \label{acongru}
Ml_1^2+Nl_1l_2+Ol_2^2+Pl_1l_3+Ql_2l_3+Rl_3^2\equiv 0\bmod{p^{n-r-1}},
\end{equation}
where $M=16a\Delta+(be-2cd)^2$, $N=-16b\Delta+2(be-2cd)(bd-2ae)$, $O=16c\Delta+(bd-2ae)^2$, $P=2(4ac-b^2)(2cd-be)$, $Q=2(4ac-b^2)(2ae-bd)$ and $R=(4ac-b^2)^2.$\\\\
Let $H:=\max\{|M|,|N|,|O|,|P|,|Q|,|Q|\}$.
Now if $6HL_r^2< p^{n-r-1}$, then the congruence above can be replaced by the equation 
$$
Q'(l_1,l_2,l_3):=Ml_1^2+Nl_1l_2+Ol_2^2+Pl_1l_3+Ql_2l_3+Rl_3^2= 0.
$$
Clearly, this is the case when $N\geq p^{n/2+2n\varepsilon}$.
We calculate the determinant of the associated matrix of $Q'$ as
$$
\Delta_{Q'}=64\Delta^2(4ac-b^2)^3
.$$
Therefore by our assumptions, $Q'$ is non-singular. So we use Proposition \ref{Hbb},
obtaining
\begin{equation}
\begin{split}
\sum\limits_{\substack{(l_1,l_2,l_3)\in \mathbb{Z}^3\\ |l_1|,|l_2|,|l_3|\le L_r\\Q'(l_1,l_2,l_3)=0 }} 1&\ll\sum_{d=1}^{L_r}\tau(|\Delta_{Q'}|)\left(1+\frac{L_r\delta_{Q'}^{1/2}}{|\Delta_{Q'}|^{1/3}}\cdot\frac{1}{d}\right)\\
&\ll_{\varepsilon} \tau(|\Delta_{Q'}|)\left(L_r+\frac{L_r^{1+\varepsilon}\delta_{Q'}^{1/2}}{|\Delta_{Q'}|^{1/3}}\right)\\
&\ll_{\varepsilon} L_r^{1+\varepsilon}.
\end{split}    
\end{equation}
 Thus, in the case when $N\geq p^{n/2+2n\varepsilon}$ we get 
\begin{equation}
\begin{split}
U&\ll\frac{N^3}{p^{3n/2}}\sum_{r=0}^{n-2}p^{r/2}\sum\limits_{\substack{(l_1,l_2,l_3)\in\mathbb{Z}^3\\|l_1|,|l_2|,|l_3|\le L_r\\Q'(l_1,l_2,l_3)=0}}1+O_{\varepsilon}\left(\frac{N^2}{p^{n(1/2-\epsilon)}}\right)\\&\ll \frac{N^3}{p^{3n/2}}\sum_{r=0}^{n-2}p^{r/2}\cdot L_r^{1+\varepsilon}+O_{\varepsilon}\left(\frac{N^2}{p^{n(1/2-\epsilon)}}\right)\\&\ll\frac{N^3}{p^{3n/2}}\sum_{r=0}^{n-2}p^{r/2}\cdot p^{n-r+2n\varepsilon-r\varepsilon+n\varepsilon^2}N^{-1-\varepsilon}+O_{\varepsilon}\left(\frac{N^2}{p^{n(1/2-\epsilon)}}\right)\\&\ll\frac{N^{2-\varepsilon}}{p^{n/2}}\cdot p^{n(2\varepsilon+\varepsilon^2)}+O_{\varepsilon}\left(\frac{N^2}{p^{n(1/2-\epsilon)}}\right).
\end{split}    
\end{equation}
This needs to be compared to the main term which is of size
$$
T_0\asymp \frac{N^3}{p^n}.
$$
If $N\ge p^{(1/2+2\varepsilon+\varepsilon^2)n}$, then $U=o(T_0)$, which completes the proof of Theorem \ref{mainresult} upon changing $2\varepsilon+\varepsilon^2$ into $\varepsilon$. 
\section{Some corrections to previous work \cite{arx}}
In this section, I will give some corrections to our work \cite{arx}. None of them will affect the results in \cite{arx}.
\begin{itemize}
    \item In \cite[Preliminaries]{arx} (page 5) it is said that if $q$ is odd, then 
$$
G_q=\sum\limits_{y=1}^q \left(\frac{y}{q}\right)e_q(y).
$$ 
This is only true when $q$ is an odd prime.
\item In \cite[Proposition 1]{arx} the map 
\begin{equation*}
m : \left\{t\in K : t^2\not=-\frac{\alpha_1}{\alpha_2}\right\} \longrightarrow \left\{(z_1,z_2)\in K^2 : \alpha_1z_1^2+\alpha_2z_2^2=-\alpha_3\right\}
\end{equation*}
may not always be bijective. To make this map bijective one needs to add one more point $\infty$ to the domain. See Proposition \ref{para}.
\item  In  \cite[ Proposition 5(ii)]{arx} the subscript $p^n$ to $e()$ is missing. The correct expression of $S_{\alpha}(f;p^n)$ is as follows:
$$
S_{\alpha}(f;p^n) =\begin{cases} e_{p^n}\left(f(\alpha^{\ast})\right)p^{(n+r)/2} & \mbox{ if } n-r \mbox{ is even,}\\
e_{p^n}\left(f(\alpha^{\ast})\right)p^{(n+r)/2}\left(\frac{A(\alpha)}{p}\right)\cdot \frac{G_p}{\sqrt{p}} & \mbox{ if } n-r \mbox{ is odd.}
\end{cases}
$$
\item On page 17 the correct expression for $f'_{s,k_1,k_2}$ is
\begin{equation*}
\begin{split}
f'_{s,k_1,k_2}(t)=2x_3\cdot p^s\cdot\frac{\alpha_2(k_2\alpha_1a-k_1\alpha_2b)t^2-2\alpha_1\alpha_2(ak_1+bk_2)tp^{2s}-\alpha_1(k_2\alpha_1a-k_1\alpha_2b)p^{2s}}{(\alpha_1p^{2s}+\alpha_2t^2)^2}
\end{split}
\end{equation*}
and therefore $ord_p(f'_{s,k_1,k_2})=s+r.$
\item On the same page it is also said that when $s>0$ then $p^{-r-2s}f'_{s,k_1,k_2}\equiv 0\bmod{p}$ implies $t\equiv 0\bmod{p}.$ This is not correct when $\frac{(k_2\alpha_1a-k_1\alpha_2b)}{p^r}\equiv 0\bmod{p}$. Therefore 
\begin{equation*}
 \sum_{s=1}^n\frac{1}{p^s}\sum\limits_{\substack{t=1\\t\not\equiv 0~~\text{mod}~~p}}^{p^n} e_{p^n}\left( f_{s,k_1,k_2}(t)\right)\neq0.  
\end{equation*}
One can treat the case separately when $\frac{(k_2\alpha_1a-k_1\alpha_2b)}{p^r}\equiv 0\bmod{p}$ as in \eqref{E}.
\item 
 On page 19 we should have 
$$
\left(\frac{A(\alpha)}{p}\right)=\left(\pm\frac{2x_3\alpha_2\sqrt{D}}{p}\right), 
$$
and this will change the expression for $U$ accordingly.
\end{itemize}

 All of the above-mentioned errors have been addressed in this paper.


\begin{thebibliography}{20}
\bibitem{arx} S.Baier, A. Haldar, {\it Asymptotic behavior of small solutions of quadratic congruences in three variables modulo prime powers.} Res. number theory 8, 58 (2022). https://doi.org/10.1007/s40993-022-00360-7.

\bibitem{Hb} Browning, T. D. and Heath-Brown, D.R., \emph{Counting rational points on
hypersurfaces}, Journal f\"ur die Reine und Angewandte Mathematik. 23
(2005), no. 584, 83–115.

\bibitem{CoZ} T. Cochrane, Z. Zhiyong, \emph{Exponential sums with rational function entries},
Acta Arith. 95, No. 1, 67--95 (2000). 

\bibitem{StSa} 
E.M. Stein, R. Shakarchi, {\it Fourier analysis. An Introduction}, 
Princeton Lectures in Analysis. 1. Princeton, NJ: Princeton University Press. xvi, 311 p. (2003).
\end{thebibliography}
\end{document}